\newtheorem{lemma}{Lemma}
\newtheorem{definition}{Definition}
\newtheorem{theorem}{Theorem}
\newtheorem{corollary}{Corollary}
\newtheorem{proposition}{Proposition}
\newtheorem{remark}{Remark}
\newtheorem{example}{Example}
\def\R{{\rm I\!R}}
\def\D{{\frak D}}
\def\hat{\widehat}
\def\tilde{\widetilde}
\def\Bar{\overline}
\def\ra{\rangle}
\def\la{\langle}
\def\oR{\overline\mathbb{R}}
\def\st{\stackrel}
\def\oR{\Bar{\R}}
\def\Argmin{\mathop{\rm Arg\,min}}
\def\argmin{\mathop{\rm arg\,min}}
\def\disp{\displaystyle}
\def\tto{\;{\lower 1pt \hbox{$\rightarrow$}}\kern -10pt
\hbox{\raise 2pt \hbox{$\rightarrow$}}\;}
\title{\sf Douglas-Rachford splitting for nonconvex optimization with application to nonconvex feasibility problems}
\author{
Guoyin Li \thanks{Department of Applied
Mathematics, University of New South Wales, Sydney 2052, Australia.
E-mail: {g.li@unsw.edu.au}. This author was partially supported by a research grant from Australian Research Council.}
\and Ting Kei Pong \thanks{Department of Applied Mathematics, the Hong Kong Polytechnic University, Hong Kong.
This author was supported partly by a research grant from Hong Kong Polytechnic University. E-mail: {tk.pong@polyu.edu.hk}.}
}
\date{Second Revised version: August 13, 2015}
\begin{document}
\maketitle

\begin{abstract}
We adapt the Douglas-Rachford (DR) splitting method to solve nonconvex feasibility problems by studying this method for a class of nonconvex optimization problem. While the convergence properties of the method for convex problems have been well studied, far less is known in the nonconvex setting. In this paper, for the direct adaptation of the method to minimize the sum of a proper closed function $g$ and a smooth function $f$ with a Lipschitz continuous gradient, we show that if the step-size parameter is smaller than a computable threshold and the sequence generated has a cluster point, then it gives a stationary point of the optimization problem. Convergence of the whole sequence and a local convergence rate are also established under the additional assumption that $f$ and $g$ are semi-algebraic. We also give simple sufficient conditions guaranteeing the boundedness of the sequence generated. We then apply our nonconvex DR splitting method to finding a point in the intersection of a closed convex set $C$ and a general closed set $D$ by minimizing the squared distance to $C$ subject to $D$. We show that if either set is bounded and the step-size parameter is smaller than a computable threshold, then the sequence generated from the DR splitting method is actually bounded. Consequently, the sequence generated will have cluster points that are stationary for an optimization problem, and the whole sequence is convergent under an additional assumption that $C$ and $D$ are semi-algebraic. We achieve these results based on a new merit function constructed particularly for the DR splitting method. Our preliminary numerical results indicate that our DR splitting method usually outperforms the alternating projection method in finding a sparse solution of a linear system, in terms of both the solution quality and the number of iterations taken.
\end{abstract}

\section{Introduction}
Many problems in diverse areas of mathematics, engineering and physics aim at finding a point in the intersection
of two closed sets.  This problem is often called the feasibility problem. Many practical optimization problems and reconstruction problems can be cast in this framework. We refer the readers to the comprehensive survey \cite{Bauschke_SIAM_review} and the recent
monograph \cite{BauCom11} for more details.

The Douglas-Rachford (DR) splitting method is an important and powerful algorithm that can be applied to solving problems with competing
structures, such as finding a point in the intersection of two closed convex sets (feasibility problem), or, more generally, minimizing the sum of two proper closed convex functions.
The latter problem is more general because the feasibility problem can be viewed as a minimization problem that minimizes the sum of the indicator functions of the two sets.
In typical applications,
the projection onto each of the constituent sets is simple to compute in the feasibility problem,
and the so-called proximal operator of each of the constituent functions is also easy to compute in the case of minimizing the sum of two functions.
Since these simple operations are usually the main computational parts of the DR splitting method, the method can be implemented efficiently in practice.

The DR splitting method aims at finding a point in the intersection of two closed sets in a Hilbert space, and was originally introduced in \cite{PDE} to solve nonlinear heat flow problems.
Later, Lions and Mercier \cite{Lions_Mercier} showed that the DR splitting method converges for two closed convex sets with nonempty intersection.
This scheme was examined further in \cite{Eckstein_Bertsekas} again in the convex setting, and its relationship with another popular method,
the proximal point algorithm, was revealed and explained therein. Recently, the DR splitting method has also been applied to various optimization problems that arise from signal processing and other applications, where the objective is the sum of two proper closed convex functions; see, for example, \cite{ComPes07,GanRechYa11,He,PaStBe14}. We refer the readers to the recent exposition \cite{BauCom11} and references therein for a discussion about convergence in the convex setting.

While the behavior of the DR splitting method has been moderately understood in the convex cases,
the theoretical justification is far from complete when the method is used in the nonconvex setting.
Nonetheless, the DR splitting method has been applied very successfully to various important problems where the underlying sets are not necessarily convex \cite{Jon_AZNIAM,Jon_JOTA}. This naturally motivates the following research direction:
\begin{quote}
\emph{Understand the DR splitting method when applied to possibly nonconvex sets.}
\end{quote}
As commented in \cite{Luke1}, the DR splitting method is notoriously
difficult to analyze compared with other projection type methods such as the alternating projection method \cite{Bauschke_SIAM_review,Jon1993,Lewis,Li}.
Despite its difficulty, there has been some important recent progress towards understanding the behavior of the DR splitting method in the nonconvex setting.
For example, it was shown in \cite{Luke1} that the DR splitting method exhibits local linear convergence for an affine set and a super-regular set (an extension of convexity which emphasizes local features), under suitable regularity conditions. Very recently, Phan improved the result in \cite{Luke1} and obtained local linear convergence results of DR splitting method for two super-regular sets \cite{Phan14}.  There are also recent advances in dealing with specific structures such as the case where the two sets are finite union of convex sets \cite{Bauschke}, and the sparse feasibility problem where one seeks a sparse solution of a linear system \cite{Luke2}. On the other hand, in spite of the various local convergence results, global convergence of the method was only established in \cite{Jon1} for finding the intersection of a line and a circle.

In this paper, we approach the above basic problem from a new perspective. Recall that the alternating projection method for finding a point in the intersection of a closed convex set $C$ and a closed set $D$ can be interpreted as an application of the proximal gradient algorithm to the optimization problem
\begin{equation}\label{introdc}
\min_{x\in D} \ \frac12 d_C^2(x)
\end{equation}
with step-length equals 1, where $d_C(x)$ is the distance from $x$ to $C$ and $x\mapsto d_C^2(x)$ is smooth since $C$ is convex. Motivated by this, we adapt the DR splitting method to solve the above optimization problem instead, which is conceivably easier to analyze due to the smooth objective. Notice that the feasibility problem is solved when the globally optimal value of \eqref{introdc} is zero.

We note that this approach is different from the common approach in the literature (see, for example, \cite{Jon1,Jon_AZNIAM,Jon_JOTA,Luke2,Bauschke}) where the DR splitting method is applied to minimizing the sum of indicator functions of the two sets. On the other hand, an approach similar to ours was considered recently in \cite{Luke08}, which studied a more general framework of algorithms; however, only local convergence of in the case when $D$ is prox-regular was established there. In our work, we aim at analyzing {\em both} global and local convergence.

In our analysis, we start with a more general setting: minimizing the sum of a smooth function $f$ with Lipschitz continuous gradient, and a proper closed function $g$. We show that, if the step-size parameter is smaller than a computable threshold and the sequence generated from the DR splitting method has a cluster point, then it gives a stationary point of the optimization problem $\min_{x\in\R^n}\{f(x) + g(x)\}$. Moreover, under the additional assumption that $f$ and $g$ are semi-algebraic, we show convergence of the whole sequence and give a local convergence rate. In addition, we also give simple sufficient conditions guaranteeing the boundedness of the sequence generated, and hence the existence of cluster points. Our analysis relies heavily on the so-called Douglas-Rachford merit function (see Definition~\ref{def:DRm}) we introduce, which is non-increasing along the sequence generated by the DR splitting method when the step-size parameter is chosen small enough.

We then apply our nonconvex DR splitting method to minimizing \eqref{introdc}, whose objective is smooth with a Lipschitz continuous gradient.
When the step-size parameter is smaller than a computable threshold and either set is compact, we show that the sequence generated from the DR splitting method is bounded.
Thus, cluster points exist and they are stationary for \eqref{introdc}. Furthermore, if $C$ and $D$ are in addition semi-algebraic, we show that the whole sequence is convergent.
Finally, we perform numerical experiments to compare our method against the alternating projection method on finding a sparse solution of a linear system. Our preliminary numerical results show that the DR splitting method usually outperforms the alternating projection method, in terms of both the number of iterations taken and the solution quality.

The rest of the paper is organized as follows. We present notation and preliminary materials in Section~\ref{sec2}. The DR splitting method applied to minimizing the sum of a smooth function $f$ with Lipschitz continuous gradient and a proper closed function $g$ is analyzed in Section~\ref{sec3}, while its application to a nonconvex feasibility problem is discussed in Section~\ref{sec4}. Numerical simulations are presented in Section~\ref{sec5}. In Section~\ref{sec6}, we present some concluding remarks.

\section{Notation and preliminaries}\label{sec2}

We use $\R^n$ to denote the $n$-dimensional Euclidean space, $\langle\cdot,\cdot\rangle$
to denote the inner product and $\|\cdot\|$ to denote the norm induced from the inner product.
For an extended-real-valued function $f$, the domain of $f$ is defined as ${\rm dom}f:=\{x \in \R^n: f(x)<+\infty\}$. The function is called proper if ${\rm dom}f\neq \emptyset$ and it is never $-\infty$. The function is called closed if it is lower semicontinuous.
For a proper function $f:\R^n \to\oR:=(-\infty,\infty]$, let $z\st{f}{\to}x$ denote $z\to x$ and $f(z)\to f(x)$. Our basic {\em subdifferential} of $f$ at $x\in\mathrm{dom}\,f$ (known also as the limiting subdifferential) is defined by
\begin{equation}\label{ls}
\partial f(x):=\left\{v\in\R^n :\;\exists x^t\st{f}{\to}x,\;v^t\to v\;\mbox{ with }\disp\liminf_{z\to x^t}\frac{f(z)-f(x^t)-\la v^t,z-x^t\ra}{\|z-x^t\|}\ge 0\mbox{ for each }t\right\}.
\end{equation}
The above definition gives immediately the following robustness property:
\begin{equation}\label{outersemi}
  \left\{v\in \R^n:\; \exists x^t \st{f}{\to}x,\; v^t\to v\;, v^t\in \partial f(x^t)\right\} \subseteq \partial f(x).
\end{equation}
We also use the notation ${\rm dom}\,\partial f := \{x\in \R^n:\; \partial f(x)\neq \emptyset\}$.
The subdifferential \eqref{ls} reduces to the derivative of $f$ denoted by $\nabla f$ if $f$ is continuously differentiable. On the other hand, if $f$ is convex, the subdifferential \eqref{ls} reduces to the classical subdifferential in convex analysis (see, for example, \cite[Proposition~8.12]{Rock98}), i.e.,
\begin{eqnarray*}
\partial f(x)=\left\{v\in \R^n :\;\langle v,z-x\rangle\le f(z)-f(x)\ \ \forall \ z\in\R^n\right\}.
\end{eqnarray*}
For a function $f$ with several groups
of variables, we write $\partial_x f$ (resp., $\nabla_x f$) for the subdifferential (resp., derivative) of $f$ with respect to the variable $x$. We say that a function $f$ is coercive if $\liminf_{\|x\|\to \infty}f(x) = \infty$. Finally, we say a function $f$ is a strongly convex function with modulus $\omega>0$ if $f-\frac{\omega}{2}\|\cdot\|^2$ is a convex function.

For a closed set $S \subseteq \R^n$, its indicator function $\delta_S$ is defined by
\[
\delta_S(x)=\begin{cases}
0 & \mbox{ if }  x \in S, \\
+\infty & \mbox{ if }  x \notin S.
\end{cases}
\]
Moreover, the (limiting) normal cone of $S$ at $x \in S$ is given by \begin{equation}\label{eq:normal_cone}N_S(x)=\partial \delta_S(x). \end{equation}
Furthermore, we use ${\rm dist}(x,S)$ or $d_S(x)$ to denote the distance from $x$ to $S$, i.e., $\inf_{y\in S}\|x - y\|$. If a set $S$ is closed and convex, we use
$P_S(x)$ to denote the projection of $x$ onto $S$.

A semi-algebraic set $S\subseteq \R^n$
is a finite union of sets of the form
\[\{x \in \R^n: h_1(x) = \cdots = h_k(x) = 0,g_1(x) < 0,\ldots,g_l(x) < 0\},
\]
where $g_1,\ldots,g_l$ and $h_1,\ldots, h_k$ are real polynomials. 
A function
$F:\R^n \rightarrow \R$ is semi-algebraic if the set $\{(x,F(x)) \in \R^{n+1}:\; x\in \R^n\}$ is semi-algebraic. Semi-algebraic sets and semi-algebraic functions
can be easily identified and cover lots of possibly nonsmooth and nonconvex functions that arise in real world applications \cite{AtBoReSo10,AtBoSv13,BolDanLew07}.

We will also make use of the following Kurdyka-{\L}ojasiewicz (KL) property that holds in particular for semi-algebraic functions.

\begin{definition}\label{def:KL} {\bf (KL property \& KL function)}
  We say that a proper function $h$ has the Kurdyka-{\L}ojasiewicz (KL) property at $\hat x\in {\rm dom}\,\partial h$ if there exist a neighborhood $\cal V$ of $\hat x$, $\nu\in (0,\infty]$ and a continuous concave function $\psi:[0,\nu)\rightarrow \R_+$ such that:
  \begin{enumerate}[{\rm (i)}]
    \item $\psi(0) = 0$ and $\psi$ is continuously differentiable on $(0,\nu)$ with $\psi' > 0$;
    \item for all $x\in {\cal V}$ with $h(\hat x)< h(x) < h(\hat x) + \nu$, it holds that
    \begin{equation*}
      \psi'(h(x) - h(\hat x))\,{\rm dist}(0,\partial h(x))\ge 1.
    \end{equation*}
  \end{enumerate}
  A proper closed function $h$ satisfying the KL property at all points in ${\rm dom}\,\partial h$ is called a KL function.
\end{definition}

It is known from
\cite[Section~4.3]{AtBoReSo10} that a proper closed semi-algebraic function always satisfies the KL property. Moreover, in this case, the KL property is satisfied with a specific form; see also \cite[Corollary~16]{BolDanLewShi07} and \cite[Section~2]{BolDanLew07} for further discussions.
\begin{proposition}{\bf (KL inequality in the semi-algebraic cases)}\label{Prop1}
Let $h$ be a proper closed semi-algebraic function on $\R^n$. Then, $h$ satisfies the KL property at all points in ${\rm dom}\,\partial h$ with $\psi(s) = cs^{1-\theta}$ for some $\theta\in [0,1)$ and $c>0$.
\end{proposition}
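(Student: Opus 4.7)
The plan is to reduce the claim to the nonsmooth \L{}ojasiewicz gradient inequality for semi-algebraic functions expressed in terms of the limiting subdifferential, which is the main deep input and is already established in the literature cited just before the proposition (namely \cite{BolDanLew07} and \cite{AtBoReSo10}). Concretely, I would invoke the following fact: for each $\hat x\in\dom\partial h$ there exist a neighborhood $\cal V$ of $\hat x$, constants $\nu>0$ and $C>0$, and an exponent $\theta\in[0,1)$ such that
\[
\bigl(h(x)-h(\hat x)\bigr)^{\theta}\le C\,\dist(0,\partial h(x))
\qquad\text{for all }x\in{\cal V}\text{ with }h(\hat x)<h(x)<h(\hat x)+\nu.
\]
This is the nonsmooth, semi-algebraic extension of \L{}ojasiewicz's classical gradient inequality, and existence of a rational exponent $\theta$ comes from the curve selection lemma applied to the semi-algebraic set-valued map $x\mapsto\partial h(x)$ together with the one-dimensional \L{}ojasiewicz inequality along a semi-algebraic selection.

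Given this, the rest is a short verification. I would set $\psi(s):=\frac{C}{1-\theta}\,s^{1-\theta}$ on $[0,\nu)$. Condition~(i) of Definition~\ref{def:KL} is immediate: $\psi(0)=0$; since $1-\theta\in(0,1]$, $\psi$ is continuous, concave, and continuously differentiable on $(0,\nu)$ with
\[
\psi'(s)=C\,s^{-\theta}>0.
\]
For condition~(ii), for any $x\in{\cal V}$ with $h(\hat x)<h(x)<h(\hat x)+\nu$, substituting $s=h(x)-h(\hat x)$ gives
\[
\psi'\bigl(h(x)-h(\hat x)\bigr)\,\dist(0,\partial h(x))
=C\bigl(h(x)-h(\hat x)\bigr)^{-\theta}\dist(0,\partial h(x))\ge 1,
\]
where the last inequality is exactly the \L{}ojasiewicz inequality above. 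Absorbing $C/(1-\theta)$ into the constant $c$ of the proposition yields the stated form $\psi(s)=cs^{1-\theta}$.

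The only nontrivial ingredient is the nonsmooth \L{}ojasiewicz inequality itself; the remaining content of the proposition is essentially a change of variables from the \L{}ojasiewicz form $u^\theta\le C\dist(0,\partial h)$ to the Kurdyka desingularizing form $\psi'(u)\dist(0,\partial h)\ge 1$. Hence the main obstacle is not to be overcome in this proof but already resolved in the cited works; here I simply have to match constants and record that the desingularizing function can be taken as a pure power. No additional hypothesis beyond semi-algebraicity is needed for existence of a uniform exponent near $\hat x$, because in the semi-algebraic setting the exponent obtained through curve selection depends only on $h$ and $\hat x$.
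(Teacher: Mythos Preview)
Your proposal is correct and matches exactly how this result is obtained. Note, however, that the paper does not actually give a proof of this proposition: it is stated as a known fact with references to \cite[Section~4.3]{AtBoReSo10}, \cite[Corollary~16]{BolDanLewShi07}, and \cite[Section~2]{BolDanLew07}, and your argument is precisely the standard unpacking of those citations---invoke the nonsmooth \L{}ojasiewicz inequality for semi-algebraic functions and rewrite it in desingularizing form via $\psi(s)=\frac{C}{1-\theta}s^{1-\theta}$.
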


\section{Douglas-Rachford splitting for structured optimization}\label{sec3}

In this section, we consider the following structured optimization problem:
  \begin{equation}\label{P1}
    \min_u \ f(u) + g(u),
  \end{equation}
  where $f$ has a Lipschitz continuous gradient whose Lipschitz continuity modulus is bounded by $L$, and $g$ is a proper closed function \footnote{We note that
  the assumption where $f$ has a Lipschitz continuous gradient
is commonly used in the literature of first-order methods; see for example, \cite[Section~5]{AtBoSv13}.}.
  In addition, we let $l\in \R$ be such that $f + \frac{l}{2}\|\cdot\|^2$ is convex. Notice that such an $l$ always exists: in particular, one can always take $l = L$. Finally, for any given parameter $\gamma>0$, which will be referred to as
  a step-size parameter throughout this paper, we assume that the proximal mapping of $\gamma g$, is well defined and easy to compute, in the sense that it is simple to find a minimizer of the following problem for each given $z$, and that such a minimizer exists:
  \begin{equation}\label{proxg}
  \min_u \ \  \gamma g(u) + \frac12 \|u - z\|^2.
  \end{equation}

  Problems in the form of \eqref{P1} arise naturally in many engineering and machine learning applications. Specifically, many sparse learning problems take the form of \eqref{P1} where $f$ is a loss function and $g$ is a regularizer with \eqref{proxg} easy to compute; see, for example, \cite{GHLYZ13} for the use of a difference-of-convex function as a regularizer, and \cite{ZengLinWangXu14} for the case where $g(x) = \sum_{i=1}^n |x_i|^\frac12$.  Below, we consider a direct adaptation of the DR splitting method to solve \eqref{P1}.\\ \

\fbox{\parbox{5.7 in}{
\begin{description}
\item {\bf Douglas-Rachford splitting method}

\item[Step 0.] Input an initial point $x^0$ and a step-size parameter $\gamma > 0$.

\item[Step 1.] Set
\begin{equation}\label{scheme}
\left\{
\begin{split}
&y^{t+1}\in\Argmin_{y} \left\{f(y) + \frac{1}{2\gamma}\|y - x^t\|^2\right\}, \\
&z^{t+1}\in\Argmin_{z} \left\{g(z) + \frac{1}{2\gamma}\|2y^{t+1} - x^t - z\|^2\right\},\\
&x^{t+1}=x^t+(z^{t+1}- y^{t+1}).
\end{split}
\right.
\end{equation}

\item[Step 2.] If a termination criterion is not met, go to Step 1.
\end{description}
}}
\vspace{.1 in}

Using the optimality conditions and the subdifferential calculus rule \cite[Exercise~8.8]{Rock98}, we see from the $y$ and $z$-updates in \eqref{scheme} that
\begin{equation}\label{fyupdate}
  \begin{split}
    0 & = \nabla f(y^{t+1}) + \frac{1}{\gamma}(y^{t+1} - x^t),\\
    0 & \in \partial g(z^{t+1}) + \frac{1}{\gamma}(z^{t+1} - y^{t+1}) - \frac{1}{\gamma}(y^{t+1} - x^t).
  \end{split}
\end{equation}
Hence, we have for all $t\ge 1$ that
\begin{equation}\label{eq:stationary}
  0 \in  \nabla f(y^t) + \partial g(z^t) + \frac{1}{\gamma}(z^t - y^t).
\end{equation}
Thus, if
\begin{equation}\label{cond1}
  \lim_{t\rightarrow \infty}\|x^{t+1} - x^t\| = \lim_{t\rightarrow \infty}\|z^{t+1} - y^{t+1}\| = 0,
\end{equation}
and if we have for a cluster point $(y^*,z^*,x^*)$ of $\{(y^t,z^t,x^t)\}$ with a convergent subsequence $\lim_{j\rightarrow \infty}(y^{t_j},z^{t_j},x^{t_j})=(y^*,z^*,x^*)$ that
\begin{equation}\label{cond2}
  \lim_{j\rightarrow \infty}g(z^{t_j})= g(z^*),
\end{equation}
then passing to the limit in \eqref{eq:stationary} along the subsequence and using \eqref{outersemi}, it is not hard to see that $(y^*,z^*)$ gives a stationary point of \eqref{P1}, in the sense that $y^* = z^*$ and
\[
0\in \nabla f(z^*) + \partial g(z^*).
\]

In the next theorem, we establish convergence of the DR splitting method on \eqref{P1} by showing that \eqref{cond1} and \eqref{cond2} hold. The proof of this convergence result heavily relies on the following definition of the Douglas-Rachford merit function.

\begin{definition}{\bf (DR merit function)}\label{def:DRm}
Let $\gamma >0$. The Douglas-Rachford merit function is defined by
  \begin{equation}\label{Ddef}
    \D_\gamma(y,z,x) := f(y) + g(z) - \frac1{2\gamma}\|y - z\|^2 + \frac1\gamma \langle x-y,z-y\rangle.
  \end{equation}
\end{definition}
This definition was motivated by the so-called Douglas-Rachford envelope considered in \cite[Eq.~35]{PaStBe14} in the convex case (that is, when $f$ and $g$ are both convex). Moreover, we see that $\D_\gamma$ can be alternatively written as
  \begin{equation}\label{Drelation}
  \begin{split}
    \D_\gamma(y,z,x) & = f(y) + g(z) + \frac1{2\gamma}\|2y - z - x\|^2 - \frac{1}{2\gamma}\|x-y\|^2 - \frac1\gamma \|y - z\|^2\\
    & = f(y) + g(z) + \frac1{2\gamma}(\|x - y\|^2 - \|x - z\|^2)\end{split}
  \end{equation}
  where the first relation follows by applying the elementary relation $\langle u,v\rangle = \frac12 (\|u + v\|^2 - \|u\|^2 - \|v\|^2)$ in \eqref{Ddef} with $u = x-y$ and $v = z -y$, while the second relation follows by completing the squares in \eqref{Ddef}.

\begin{theorem}\label{thm1} {\bf (Global subsequential convergence)}
  Suppose that the parameter $\gamma > 0$ is chosen so that
  \begin{equation}\label{gammacond}
  (1 + \gamma L)^2 + \frac{5\gamma l}2 - \frac{3}{2} < 0.
  \end{equation}
  Then $\{\D_\gamma(y^t,z^t,x^t)\}_{t\ge 1}$ is nonincreasing.

  Moreover, if a cluster point of the sequence $\{(y^t,z^t,x^t)\}$ exists, then \eqref{cond1} holds. Furthermore, for any cluster point $(y^*,z^*,x^*)$, we have $z^* = y^*$, and
  \[
  0\in \nabla f(z^*) + \partial g(z^*).
  \]
\end{theorem}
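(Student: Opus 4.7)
The plan is to establish the three conclusions in order: (a) monotonicity of $\D_\gamma$ along the iterates, (b) the vanishing-gap conditions \eqref{cond1}, and (c) the stationarity inclusion at any cluster point.

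\textbf{Step 1 (monotonicity).} I would split the one-step increment via the intermediate point $(y^{t+1},z^{t+1},x^t)$. Two preparatory identities extracted from \eqref{scheme} drive the whole argument: the $y$-update gives $x^t - y^{t+1} = \gamma\nabla f(y^{t+1})$, while subtracting the $y$-updates at two consecutive indices and using the $x$-update yields, for $t\ge 1$,
\[
z^t - y^{t+1} = \gamma\bigl(\nabla f(y^{t+1}) - \nabla f(y^t)\bigr).
\]
Note that \eqref{gammacond} forces $\gamma l < 1$, so $f(\cdot)+\frac{1}{2\gamma}\|\cdot - x^t\|^2$ is strongly convex with modulus $\gamma^{-1}-l$, yielding $f(y^{t+1}) - f(y^t) \le \frac{1}{2\gamma}(\|x^t - y^t\|^2 - \|x^t - y^{t+1}\|^2) - \frac{1}{2}(\gamma^{-1}-l)\|y^{t+1}-y^t\|^2$. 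Optimality of $z^{t+1}$ against $z^t$ in its subproblem supplies the analogous bound for $g(z^{t+1})-g(z^t)$. Plugging both into the alternative form \eqref{Drelation} for $\D_\gamma(y^{t+1},z^{t+1},x^t) - \D_\gamma(y^t,z^t,x^t)$ and expanding the quadratic terms using the shift $w := \gamma\nabla f(y^{t+1}) = x^t - y^{t+1}$, the cross terms linear in $w$ and the $\|w\|^2$ contributions cancel cleanly, leaving
\[
\D_\gamma(y^{t+1},z^{t+1},x^t) - \D_\gamma(y^t,z^t,x^t) \le -\tfrac{1}{2}(\gamma^{-1}-l)\|y^{t+1}-y^t\|^2 + \tfrac{1}{\gamma}\|z^t - y^{t+1}\|^2 - \tfrac{1}{\gamma}\|z^{t+1}-y^{t+1}\|^2.
\]
A short direct calculation for the second piece (only $x$ changes from $x^t$ to $x^{t+1} = x^t + z^{t+1} - y^{t+1}$) produces exactly $\frac{1}{\gamma}\|z^{t+1} - y^{t+1}\|^2$, cancelling the last term above. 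Applying the second preparatory identity together with the Lipschitz property to bound $\|z^t - y^{t+1}\| \le \gamma L\|y^{t+1}-y^t\|$ then yields the clean descent
\[
\D_\gamma(y^{t+1},z^{t+1},x^{t+1}) - \D_\gamma(y^t,z^t,x^t) \le -\tfrac{1}{2\gamma}\bigl(1 - \gamma l - 2\gamma^2 L^2\bigr)\|y^{t+1}-y^t\|^2.
\]
Condition \eqref{gammacond}, combined with the automatic bound $l\ge -L$ from the Lipschitz gradient, implies $1 - \gamma l - 2\gamma^2 L^2 > 0$ (with a uniform gap), delivering both monotonicity and a quantitative descent coefficient.

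\textbf{Step 2 (vanishing gaps).} Given a cluster point $(y^*,z^*,x^*)$ along a subsequence $\{t_j\}$, lsc of $f,g$ and continuity of the quadratic part of \eqref{Drelation} imply $\D_\gamma$ is bounded below along the subsequence; combined with monotonicity, the entire sequence $\{\D_\gamma(y^t,z^t,x^t)\}$ converges to a finite limit. Telescoping the descent bound from Step~1 gives $\sum_{t\ge 1}\|y^{t+1}-y^t\|^2 < \infty$, so $\|y^{t+1}-y^t\|\to 0$. The preparatory identity $z^t - y^{t+1} = \gamma(\nabla f(y^{t+1})-\nabla f(y^t))$ together with Lipschitzness then forces $\|z^t - y^{t+1}\|\to 0$; writing $z^{t+1} - y^{t+1} = (z^{t+1} - y^{t+2}) + (y^{t+2} - y^{t+1})$ and reapplying the same estimate gives $\|z^{t+1} - y^{t+1}\|\to 0$. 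The update $x^{t+1} - x^t = z^{t+1} - y^{t+1}$ then delivers both limits in \eqref{cond1}.

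\textbf{Step 3 (stationarity).} The limit $\|z^t - y^t\|\to 0$ forces $z^* = y^*$. To apply the outer-semicontinuity property \eqref{outersemi} of $\partial g$ when passing to the limit in \eqref{eq:stationary}, I still need \eqref{cond2}. Testing the $z$-subproblem optimality at iteration $t_j - 1$ against $z = z^*$ gives
\[
g(z^{t_j}) + \frac{1}{2\gamma}\|z^{t_j} - (2y^{t_j} - x^{t_j - 1})\|^2 \le g(z^*) + \frac{1}{2\gamma}\|z^* - (2y^{t_j} - x^{t_j - 1})\|^2.
\]
Since $\|x^{t+1} - x^t\|\to 0$ implies $x^{t_j - 1} \to x^*$ and similarly $y^{t_j}\to y^*$, the two quadratic terms share a common limit, so $\limsup g(z^{t_j}) \le g(z^*)$; combined with lsc of $g$ this gives \eqref{cond2}. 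Passing to the limit in \eqref{eq:stationary} along $t_j$ and applying \eqref{outersemi} at $z^*$ then delivers $0 \in \nabla f(z^*) + \partial g(z^*)$.

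The main obstacle is the algebraic bookkeeping in Step~1: the $w$-dependent terms must cancel exactly, and the strong-convexity modulus $\gamma^{-1}-l$ of the $y$-subproblem is what produces the negative $-\frac{1}{2}(\gamma^{-1}-l)\|y^{t+1}-y^t\|^2$ that must dominate the residual $\frac{1}{\gamma}\|z^t - y^{t+1}\|^2$. This balance, refined through $\|z^t - y^{t+1}\|\le\gamma L\|y^{t+1}-y^t\|$, is exactly what calibrates the threshold \eqref{gammacond}.
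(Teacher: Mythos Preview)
Your argument is correct, and in Step~1 it is actually cleaner than the paper's. Both proofs split the one-step increment through the intermediate point $(y^{t+1},z^{t+1},x^t)$ and both use the strong convexity of the $y$-subproblem to produce the $-\frac{1}{2}(\gamma^{-1}-l)\|y^{t+1}-y^t\|^2$ term. The difference lies in how $\|y^{t+1}-z^t\|^2$ is controlled. The paper writes $y^{t+1}-z^t = (y^{t+1}-y^t) - (x^t-x^{t-1})$, then invokes monotonicity of $\nabla(f+\tfrac{l}{2}\|\cdot\|^2)$ to bound the cross term and finally estimates $\|x^t-x^{t-1}\|\le (1+\gamma L)\|y^{t+1}-y^t\|$; this yields the coefficient $\frac{1}{\gamma}\bigl[(1+\gamma L)^2+\frac{5\gamma l}{2}-\frac{3}{2}\bigr]$ appearing in \eqref{gammacond}. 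You instead use the exact identity $z^t-y^{t+1}=\gamma\bigl(\nabla f(y^{t+1})-\nabla f(y^t)\bigr)$ directly, obtaining $\|z^t-y^{t+1}\|\le\gamma L\|y^{t+1}-y^t\|$ and hence the descent coefficient $-\frac{1}{2\gamma}(1-\gamma l-2\gamma^2L^2)$. Since $4\gamma(L+l)\ge 0$, your condition $1-\gamma l-2\gamma^2L^2>0$ is strictly weaker than \eqref{gammacond}, so your route both simplifies the algebra and reveals that the theorem holds under a larger range of $\gamma$ than stated. Steps~2 and~3 of your proposal follow the same logic as the paper (telescoping plus lower semicontinuity, then the $z$-subproblem comparison to establish \eqref{cond2}), only with your identity replacing \eqref{ineqbd} when deducing the vanishing gaps.
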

\begin{remark}
  Notice that $\lim_{\gamma\downarrow 0}[(1 + \gamma L)^2 + \frac{5\gamma l}2 - \frac{3}{2}] = -\frac12 < 0$. Thus, given $l\in \R$ and $L > 0$, the condition \eqref{gammacond} will be satisfied for any sufficiently small $\gamma > 0$. Moreover, from the definition of $l$, we must have $l\in [-L,\infty)$. Hence, from the quadratic formula and some simple arithmetics, it is not hard to see that the $\gamma$ chosen as in \eqref{gammacond} has to satisfy
  \begin{equation}\label{gammaupperbound}
  \gamma < \frac{-(2.5 l + 2L) + \sqrt{(2.5 l + 2L)^2 + 2L^2}}{2L^2}\le \frac{1}{L},
  \end{equation}
  since the maximum of the fraction in the middle is achieved at $l = -L$ for each fixed $L$.
  We also comment on the $y$ and $z$-updates of the DR splitting method. Note that the $z$-update involves a computation of the proximal mapping of $\gamma g$, which is simple by assumption. On the other hand, from the choice of $\gamma$ in Theorem~\ref{thm1}, we have $l < \frac{3}{5\gamma} < \frac1\gamma$. This together with the assumption $f + \frac{l}{2}\|\cdot\|^2$ is convex shows that the objective function in the unconstrained smooth minimization problem for the $y$-update is a strongly convex function with  modulus $\frac{1}{\gamma}-l>0$.
\end{remark}
\begin{proof}
  We first study the behavior of $\D_\gamma$ along the sequence generated from the DR splitting method. First of all, notice from \eqref{Ddef} that
  \begin{equation}\label{D1ineq}
    \D_\gamma(y^{t+1},z^{t+1},x^{t+1}) - \D_\gamma(y^{t+1},z^{t+1},x^t) = \frac1\gamma\langle x^{t+1}-x^t,z^{t+1} - y^{t+1}\rangle = \frac1\gamma\|x^{t+1} - x^t\|^2,
  \end{equation}
  where the last equality follows from the definition of $x$-update.
  Next, using the first relation in \eqref{Drelation}, we obtain that
  \begin{equation}\label{D2ineq}
    \begin{split}
      &\D_\gamma(y^{t+1},z^{t+1},x^t) - \D_\gamma(y^{t+1},z^t,x^t) \\
      & = g(z^{t+1}) + \frac1{2\gamma}\|2y^{t+1} - z^{t+1} - x^t\|^2 - \frac1\gamma \|y^{t+1} - z^{t+1}\|^2 \\
      &\ \ \ \ - g(z^t) - \frac1{2\gamma}\|2y^{t+1} - z^t - x^t\|^2 + \frac1\gamma \|y^{t+1} - z^t\|^2\\
      & \le \frac1\gamma (\|y^{t+1} - z^t\|^2 - \|y^{t+1} - z^{t+1}\|^2) = \frac1\gamma (\|y^{t+1} - z^t\|^2 - \|x^{t+1} - x^t\|^2),
    \end{split}
  \end{equation}
  where the inequality follows from the definition of $z^{t+1}$ as a minimizer, and the
  last equality follows from the definition of $x^{t+1}$. Next, notice from the first relation in \eqref{fyupdate} that
  \[
  \frac{1}{\gamma}(x^t - y^{t+1}) + ly^{t+1} = \nabla \left(f + \frac{l}{2}\|\cdot\|^2\right)(y^{t+1}).
  \]
  Since $f + \frac{l}{2}\|\cdot\|^2$ is convex by assumption, using the monotonicity of the gradient of a convex function, we see that for all $t\ge 1$, we have
  \begin{equation*}
    \begin{split}
      &\left\langle \left(\frac{1}{\gamma}(x^t - y^{t+1}) + ly^{t+1}\right) - \left(\frac{1}{\gamma}(x^{t-1} - y^t) + ly^t\right),y^{t+1} - y^t\right\rangle \ge 0 \\
      &\Longrightarrow
      \langle x^{t} -x^{t-1},y^{t+1} - y^t\rangle \ge (1 - \gamma l)\|y^{t+1} - y^t\|^2.
    \end{split}
  \end{equation*}
  Hence, we see further that
  \begin{equation}\label{bdy}
    \begin{split}
      &\|y^{t+1} - z^t\|^2 = \|y^{t+1} - y^t + y^t - z^t\|^2 = \|y^{t+1} - y^t - (x^t - x^{t-1})\|^2\\
      & = \|y^{t+1} - y^t\|^2 - 2\langle y^{t+1} - y^t,x^t - x^{t-1}\rangle + \|x^t - x^{t-1}\|^2\\
      & \le (-1+2\gamma l)\|y^{t+1} - y^t\|^2 +  \|x^t - x^{t-1}\|^2,
    \end{split}
  \end{equation}
  where we made use of the definition of $x^t$ for the second equality. Plugging \eqref{bdy} into \eqref{D2ineq}, we obtain that whenever $t\ge 1$,
  \begin{equation}\label{D2ineq2}
    \D_\gamma(y^{t+1},z^{t+1},x^t) - \D_\gamma(y^{t+1},z^t,x^t) \le -\frac1\gamma \|x^{t+1} - x^t\|^2 + \frac1\gamma \left((-1+2\gamma l)\|y^{t+1} - y^t\|^2 +  \|x^t - x^{t-1}\|^2\right).
  \end{equation}

  Finally, using the second relation in \eqref{Drelation}, we obtain that
  \begin{equation}\label{D3ineq}
    \begin{split}
      \D_\gamma(y^{t+1},z^t,x^t) - \D_\gamma(y^t,z^t,x^t)
      & = f(y^{t+1}) + \frac1{2\gamma} \|x^t - y^{t+1}\|^2 - f(y^t) - \frac1{2\gamma} \|x^t - y^t\|^2\\
      & \le -\frac{1}{2}\left(\frac1\gamma - l\right)\|y^{t+1} - y^t\|^2,
    \end{split}
  \end{equation}
  where the  inequality follows from the fact that $f+\frac1{2\gamma} \|x^t - \cdot\|^2$ is a strongly convex function with modulus $\frac{1}{\gamma}-l$ and the definition of $y^{t+1}$ as a minimizer.  
  Summing \eqref{D1ineq}, \eqref{D2ineq2} and \eqref{D3ineq}, we see further that for any $t\ge 1$,
  \begin{equation}\label{usefulrelation1}
    \D_\gamma(y^{t+1},z^{t+1},x^{t+1}) - \D_\gamma(y^t,z^t,x^t) \le \frac{-3+5\gamma l}{2\gamma}\|y^{t+1} - y^{t}\|^2 +\frac1\gamma\|x^t - x^{t-1}\|^2.
  \end{equation}
  Since we also have from the first relation in \eqref{fyupdate} and the Lipschitz continuity of $\nabla f$ that for $t\ge 1$
  \begin{equation}\label{ineqbd}
  \|x^t - x^{t-1}\| \le (1 + \gamma L)\|y^{t+1} - y^t\|,
  \end{equation}
  we conclude further that for any $t\ge 1$
  \begin{equation}\label{useful2}
    \D_\gamma(y^{t+1},z^{t+1},x^{t+1}) - \D_\gamma(y^t,z^t,x^t) \le \frac{1}{\gamma}\left((1 + \gamma L)^2 + \frac{5\gamma l}2 - \frac32\right)\|y^{t+1} - y^t\|^2.
  \end{equation}
  Since $(1 + \gamma L)^2 + \frac{5\gamma  l}2  - \frac{3}{2} < 0$ by our choice of $\gamma$, we see that $\{\D_\gamma(y^t,z^t,x^t)\}_{t\ge 1}$ is nonincreasing.

  Summing \eqref{useful2} from $t=1$ to $N-1\ge 1$, we obtain that
  \begin{equation}\label{keybd}
    \D_\gamma(y^N,z^N,x^N) - \D_\gamma(y^1,z^1,x^1) \le \frac1\gamma\left( (1 + \gamma L)^2+\frac{5\gamma l}2 - \frac{3}{2} \right)\sum_{t=1}^{N-1}\|y^{t+1} - y^t\|^2.
  \end{equation}
  Hence, if a cluster point $(y^*,z^*,x^*)$ exists with a convergent subsequence
  $\lim_{j\rightarrow \infty}(y^{t_j},z^{t_j},x^{t_j}) = (y^*,z^*,x^*)$, then using the lower semi-continuity of
  $\D_\gamma$ and taking limit as $j\rightarrow \infty$ with $N = t_j$ in \eqref{keybd}, we have
  \begin{equation*}
    -\infty < \D_\gamma(y^*,z^*,x^*) - \D_\gamma(y^1,z^1,x^1) \le \frac1\gamma\left( (1 + \gamma L)^2+\frac{5\gamma l}2 - \frac{3}{2} \right)\sum_{t=1}^{\infty}\|y^{t+1} - y^t\|^2,
  \end{equation*}
  where the first inequality follows from the fact that $\D_\gamma$ is proper.
  From this we conclude immediately that $\lim_{t\rightarrow \infty}\|y^{t+1} - y^t\|=0$. Combining this with \eqref{ineqbd}, we conclude that \eqref{cond1} holds.
  Furthermore, combining these with the third relation in \eqref{scheme}, we obtain further that $\lim_{t\rightarrow \infty}\|z^{t+1} - z^t\|=0$.
  Thus, if $(y^*,z^*,x^*)$ is a cluster point of $\{(y^t,z^t,x^t)\}$ with a convergent subsequence $\{(y^{t_j},z^{t_j},x^{t_j})\}$ so that
  $\lim_{j\rightarrow \infty}(y^{t_j},z^{t_j},x^{t_j}) = (y^*,z^*,x^*)$, then
  \begin{equation}\label{limit}
  \lim_{j\rightarrow \infty}(y^{t_j},z^{t_j},x^{t_j}) =
  \lim_{j\rightarrow \infty}(y^{t_j-1},z^{t_j-1},x^{t_j-1}) = (y^*,z^*,x^*).
  \end{equation}
  From the definition of $z^t$ as a minimizer, we have
  \begin{equation}\label{glimit}
  g(z^t) + \frac1{2\gamma}\|2y^t - z^t - x^{t-1}\|^2 \le g(z^*) + \frac1{2\gamma}\|2y^t - z^* - x^{t-1}\|^2.
  \end{equation}
  Taking limit along the convergent subsequence
  and using \eqref{limit} yields
  \begin{equation}\label{glimit2}
  \limsup_{j\rightarrow \infty}g(z^{t_j})\le g(z^*).
  \end{equation}
  On the other hand, by the lower semicontinuity of $g$, we have $\liminf_{j\rightarrow \infty}g(z^{t_j})\ge g(z^*)$.
  Consequently, \eqref{cond2} holds.
  Now passing to the limit in \eqref{eq:stationary} along the convergent subsequence $\{(y^{t_j},z^{t_j},x^{t_j})\}$, and using \eqref{cond1}, \eqref{cond2} and \eqref{outersemi}, we see that
  the conclusion of the theorem follows.
\end{proof}

Under the additional assumption that the functions $f$ and $g$ are semi-algebraic functions, we now show that, in the next theorem that, if the whole sequence generated has a cluster point, then it is actually convergent.
The argument is largely inspired from the proof of \cite[Lemma~2.6]{AtBoSv13} with suitable modifications.

\begin{theorem}\label{thm1.5} {\bf (Global convergence of the whole sequence)}
  Suppose that the step-size parameter $\gamma > 0$ is chosen as in \eqref{gammacond}
  and the sequence $\{(y^t,z^t,x^t)\}$ generated has a cluster point $(y^*,z^*,x^*)$. Suppose in addition that $f$ and $g$ are semi-algebraic functions.
  Then the whole sequence $\{(y^t,z^t,x^t)\}$ is convergent.
\end{theorem}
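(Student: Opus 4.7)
My plan is to apply the Kurdyka--{\L}ojasiewicz (KL) machinery in the spirit of \cite{AtBoSv13} to the Douglas--Rachford merit function $\D_\gamma$. The three ingredients needed are (i) a sufficient decrease of $\D_\gamma$ along the iterates, (ii) a relative error bound of the form $\dist(0,\partial\D_\gamma(w^t))\le\kappa\|y^{t+1}-y^t\|$ at the iterate $w^t:=(y^t,z^t,x^t)$, and (iii) continuity $\D_\gamma(w^t)\searrow\D_\gamma(w^*)$ along the sequence. Since $f$ and $g$ are semi-algebraic and the remaining terms in \eqref{Ddef} are polynomials in $(y,z,x)$, $\D_\gamma$ is semi-algebraic on $\R^{3n}$, so Proposition~\ref{Prop1} supplies a desingularizing function $\psi(s)=cs^{1-\theta}$ of the KL property at the cluster point $w^*:=(y^*,z^*,x^*)$.

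Ingredient (i) is already contained in \eqref{useful2} with constant $c_1:=\frac{1}{\gamma}\bigl(\tfrac{3}{2}-(1+\gamma L)^2-\tfrac{5\gamma l}{2}\bigr)>0$ by our choice \eqref{gammacond}. For (ii), I would differentiate the second representation in \eqref{Drelation} componentwise to read off $\partial_y\D_\gamma(y,z,x)=\nabla f(y)-\gamma^{-1}(x-y)$, $\partial_z\D_\gamma(y,z,x)\ni\xi+\gamma^{-1}(x-z)$ for any $\xi\in\partial g(z)$, and $\partial_x\D_\gamma(y,z,x)=\gamma^{-1}(z-y)$. Substituting the optimality conditions \eqref{fyupdate} shifted by one step, together with the identity $x^t-x^{t-1}=z^t-y^t$ from the $x$-update, produces an explicit element $\eta^t\in\partial\D_\gamma(w^t)$ whose norm is proportional to $\|x^t-x^{t-1}\|$; then \eqref{ineqbd} yields $\|\eta^t\|\le\kappa\|y^{t+1}-y^t\|$ for some $\kappa>0$. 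For (iii), the limit $g(z^{t_j})\to g(z^*)$ derived in and around \eqref{glimit2}, combined with continuity of $f$ and of the quadratic terms in \eqref{Drelation} and with the monotonicity of $\{\D_\gamma(w^t)\}$ from Theorem~\ref{thm1}, forces $\D_\gamma(w^t)\searrow\D_\gamma(w^*)$.

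With these in hand I would execute the standard telescoping argument. The degenerate case $\D_\gamma(w^T)=\D_\gamma(w^*)$ for some finite $T$ is dispatched separately: the sufficient decrease then forces $y^{t+1}=y^t$ for all $t\ge T$, after which \eqref{ineqbd} and the $x$-update propagate this to $x^t=x^{t-1}$ and $z^{t+1}=y^{t+1}$, so the whole sequence is eventually constant. Otherwise, using subsequential convergence to $w^*$ and the fact $\|y^{t+1}-y^t\|\to 0$ from Theorem~\ref{thm1}, one may arrange for the tail of $\{w^t\}$ to lie inside a KL neighborhood of $w^*$. Setting $\Delta_t:=\psi(\D_\gamma(w^t)-\D_\gamma(w^*))$, concavity of $\psi$ and the KL inequality applied at $w^t$ yield
\[
\Delta_t-\Delta_{t+1}\;\ge\;\frac{\D_\gamma(w^t)-\D_\gamma(w^{t+1})}{\dist(0,\partial\D_\gamma(w^t))}\;\ge\;\frac{c_1}{\kappa}\,\|y^{t+1}-y^t\|,
\]
so telescoping immediately gives $\sum_t\|y^{t+1}-y^t\|<\infty$. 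Then \eqref{ineqbd} transfers summability to $\{\|x^{t+1}-x^t\|\}$, and the identity $z^{t+1}-z^t=(x^{t+1}-x^t)-(x^t-x^{t-1})+(y^{t+1}-y^t)$ transfers it to $\{\|z^{t+1}-z^t\|\}$, making $\{w^t\}$ Cauchy and hence convergent to its cluster point $w^*$. The main technical obstacle I anticipate is the bookkeeping required to guarantee that the iterates, once inside the KL neighborhood of $w^*$, cannot leave it before the summability takes effect; this is handled by the usual uniformized KL device from \cite{AtBoSv13}.
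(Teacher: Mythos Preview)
Your proposal is correct and follows essentially the same route as the paper: verify sufficient decrease via \eqref{useful2}, exhibit an explicit subgradient of $\D_\gamma$ at $w^t$ of size $O(\|x^t-x^{t-1}\|)\le O(\|y^{t+1}-y^t\|)$ using \eqref{fyupdate} and \eqref{ineqbd}, establish $\D_\gamma(w^t)\searrow\D_\gamma(w^*)$ via the argument around \eqref{glimit2}, and then run the KL telescoping from \cite{AtBoSv13}. One terminological caution: what you call the ``uniformized KL device'' usually refers to a lemma (due to Bolte--Sabach--Teboulle) that requires the full cluster-point set to be compact, which is \emph{not} assumed here; the paper instead carries out the explicit induction of \cite[Lemma~2.6]{AtBoSv13}, showing that once $y^N$ is chosen close enough to $y^*$ (with the additional budget $\tfrac{\tau}{K}\psi(\D_\gamma(w^N)-l^*)$ accounted for), the tail $\{y^t\}_{t\ge N}$ never leaves a ball $B_\rho$, and the DR relations then force $(y^t,z^t,x^t)\in{\bf B}_\rho\subseteq\mathcal V$ --- so be sure to invoke the induction, not the uniformized lemma.
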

\begin{proof}

  We first consider the subdifferential of $\D_\gamma$ at $(y^{t+1},z^{t+1},x^{t+1})$. Notice that
  for any $t\ge 0$, we have
  \begin{equation*}
    \begin{split}
      \nabla_x \D_\gamma(y^{t+1},z^{t+1},x^{t+1}) &= \frac1\gamma (z^{t+1} - y^{t+1}) = \frac1\gamma (x^{t+1} - x^t),\\
      \nabla_y \D_\gamma(y^{t+1},z^{t+1},x^{t+1}) &= \nabla f(y^{t+1}) + \frac1{\gamma}(y^{t+1} - x^{t+1}) = \frac1\gamma(x^t - x^{t+1}),
    \end{split}
  \end{equation*}
  where for the first gradient we made use of \eqref{Ddef} and the definition of $x^{t+1}$, while for the second gradient we made use of the second relation in \eqref{Drelation} and the first relation in \eqref{fyupdate}. Moreover, for the subdifferential with respect to $z$, we have from the second relation in \eqref{Drelation} that
  \begin{equation*}
   \begin{split}
    & \partial_z \D_\gamma(y^{t+1},z^{t+1},x^{t+1}) = \partial g(z^{t+1}) - \frac1\gamma (z^{t+1} - x^{t+1})\\
    & = \partial g(z^{t+1}) + \frac1\gamma(z^{t+1} - y^{t+1}) - \frac1\gamma(y^{t+1}-x^t) - \frac1\gamma(z^{t+1} - y^{t+1}) + \frac1\gamma(y^{t+1}-x^t) - \frac1\gamma (z^{t+1} - x^{t+1})\\
    & \ni -\frac2\gamma(z^{t+1} - y^{t+1}) + \frac1\gamma(x^{t+1}-x^t) = -\frac1\gamma(x^{t+1}-x^t),
   \end{split}
  \end{equation*}
  where the inclusion follows from the second relation in \eqref{fyupdate}, and the last equality follows from the definition of $x^{t+1}$.
  The above relations together with \eqref{ineqbd} imply the existence of $\tau>0$
  so that whenever $t\ge 1$, we have
  \begin{equation}\label{gradientbd}
    {\rm dist}(0,\partial \D_\gamma(y^t,z^t,x^t)) \le \tau \|y^{t+1} - y^t\|.
  \end{equation}
  On the other hand, notice from \eqref{useful2} that there exists $K > 0$ so that
  \begin{equation}\label{Dgammadecrease}
    \D_\gamma(y^t,z^t,x^t) - \D_\gamma(y^{t+1},z^{t+1},x^{t+1}) \ge K\|y^{t+1} - y^t\|^2.
  \end{equation}
  In particular, $\{\D_\gamma(y^t,z^t,x^t)\}$ is non-increasing. Let $\{(y^{t_i},z^{t_i},x^{t_i})\}$ be a convergent subsequence that converges to
  $(y^*,z^*,x^*)$. Then, from the lower semicontinuity of $\D_{\gamma}$, we see that the sequence $\{\D_\gamma(y^{t_i},z^{t_i},x^{t_i})\}$  is bounded below. This together with the non-increasing property of $\{\D_\gamma(y^t,z^t,x^t)\}$ shows that $\{\D_\gamma(y^t,z^t,x^t)\}$ is also bounded below, and so,
  %
  $\lim_{t\rightarrow \infty}\D_\gamma(y^t,z^t,x^t) = l^*$ exists.

  We next claim that $l^*= \D_\gamma(y^*,z^*,x^*)$. Let $\{(y^{t_j},z^{t_j},x^{t_j})\}$ be any subsequence that converges to $(y^*,z^*,x^*)$. Then from lower semicontinuity, we readily have
  \begin{equation*}
    \liminf_{j\rightarrow \infty}\D_\gamma(y^{t_j},z^{t_j},x^{t_j}) \ge \D_\gamma(y^*,z^*,x^*).
  \end{equation*}
  On the other hand, proceeding as in \eqref{limit}, \eqref{glimit} and \eqref{glimit2}, we can conclude further that
  \begin{equation*}
    \limsup_{j\rightarrow \infty}\D_\gamma(y^{t_j},z^{t_j},x^{t_j}) \le \D_\gamma(y^*,z^*,x^*).
  \end{equation*}
  These together with the existence of $\lim_{t\rightarrow \infty}\D_\gamma(y^t,z^t,x^t)$ shows that $l^*= \D_\gamma(y^*,z^*,x^*)$, as claimed.
  Note that if $\D_\gamma(y^t,z^t,x^t) = l^*$ for some $t\ge 1$, then $\D_\gamma(y^t,z^t,x^t) = \D_\gamma(y^{t+k},z^{t+k},x^{t+k})$ for all $k\ge 0$ since the sequence is non-increasing. Then \eqref{Dgammadecrease} gives $y^t = y^{t+k}$ for all $k\ge 0$. From \eqref{ineqbd}, we see that $x^t = x^{t+k}$ for $k\ge 0$. These together with the third relation in \eqref{scheme} show that we also have $z^{t+1} = z^{t+k}$ for $k\ge 1$. Thus, the sequence remains constant from the $(t+1)$st iteration onward. Since this theorem holds trivially when this happens, from now on, we assume $\D_\gamma(y^t,z^t,x^t) > l^*$ for all $t\ge 1$.

  Next, from \cite[Section~4.3]{AtBoReSo10} and our assumption on semi-algebraicity, the function $(y,z,x)\mapsto \D_\gamma(y,z,x)$ is a KL function. From the property of KL functions, there exist $\nu > 0$, a neighborhood ${\cal V}$ of $(y^*,z^*,x^*)$ and a continuous concave function $\psi:[0,\nu)\rightarrow \R_+$ as described in Definition~\ref{def:KL} so that
  for all $(y,z,x)\in {\cal V}$ satisfying $l^*< \D_\gamma(y,z,x) < l^* + \nu$, we have
    \begin{equation}\label{eq:KL_ineq2}
      \psi'(\D_\gamma(y,z,x) - l^*)\,{\rm dist}(0,\partial \D_\gamma(y,z,x))\ge 1.
    \end{equation}

  Pick $\rho > 0$ so that
  \[
  {\bf B}_\rho:=\left\{(y,z,x):\; \|y - y^*\|<\rho,\ \|x - x^*\| < (2 + \gamma L)\rho,\ \|z-z^*\| < 2\rho\right\}\subseteq {\cal V}
  \]
  and set $B_\rho:= \{y:\; \|y - y^*\| < \rho\}$.
  Observe from the first relation in \eqref{fyupdate} that
  \begin{equation*}
    \|x^t - x^*\| \le \|x^t - x^{t-1}\| + \|x^{t-1} - x^*\| \le \|x^t - x^{t-1}\| + (1 + \gamma L)\|y^t - y^*\|.
  \end{equation*}
  Since \eqref{cond1} holds by Theorem~\ref{thm1}, there exists $N_0\ge 1$ so that $\|x^t - x^{t-1}\| < \rho$ whenever $t\ge N_0$. Thus, it follows that $\|x^t - x^*\|< (2 + \gamma L)\rho$ whenever $y^t \in B_\rho$
  and $t\ge N_0$.
  Next, using the third relation in \eqref{scheme}, we see also that for all $t\ge N_0$,
  \begin{equation*}
    \|z^t - z^*\| \le \|y^t - y^*\| + \|x^t - x^{t-1}\| < 2\rho
  \end{equation*}
  whenever $y^t \in B_\rho$. Consequently, if $y^t\in B_\rho$ and $t\ge N_0$, then $(y^t,z^t,x^t)\in {\bf B}_\rho\subseteq {\cal V}$.
  Furthermore, using the facts that $(y^*,z^*,x^*)$ is a cluster point, that $\lim_{t\rightarrow \infty}\D_\gamma(y^t,z^t,x^t) = l^*$, and that $\D_\gamma(y^t,z^t,x^t)>l^*$ for all $t\ge 1$,
  it is not hard to see that there exists $(y^N,z^N,x^N)$ with $N \ge N_0$ such that
  \begin{enumerate}[{\rm (i)}]
    \item $y^N\in B_\rho$ and $l^*< \D_\gamma(y^N,z^N,x^N) < l^* + \nu$;
    \item $\|y^N - y^*\| + \frac{\tau}{K}\psi(\D_\gamma(y^{N},z^{N},x^{N}) - l^*) < \rho$.
  \end{enumerate}

  Before proceeding further, we show that whenever $y^t\in B_\rho$ and $l^* < \D_\gamma(y^t,z^t,x^t) < l^* + \nu$ for some fixed $t\ge N_0$, we have
  \begin{equation}\label{xbd}
    \|y^{t+1} - y^t\| \le \frac \tau K [\psi(\D_\gamma(y^t,z^t,x^t) - l^*) - \psi(\D_\gamma(y^{t+1},z^{t+1},x^{t+1}) - l^*)].
  \end{equation}
  Since $\{\D_\gamma(y^t,z^t,x^t)\}$ is non-increasing and $\psi$ is increasing, \eqref{xbd} clearly holds if $y^{t+1} = y^t$. Hence, suppose without loss of generality that $y^{t+1} \neq y^t$. Since $y^t\in B_\rho$ and $t\ge N_0$, we have $(y^t,z^t,x^t)\in {\bf B}_\rho\subseteq {\cal V}$. Hence, \eqref{eq:KL_ineq2} holds for $(y^t,z^t,x^t)$. Making use of
  the concavity of $\psi$, \eqref{gradientbd}, \eqref{Dgammadecrease} and \eqref{eq:KL_ineq2}, we see that for all such $t$
  \begin{equation*}
    \begin{split}
      & \tau \|y^{t+1} - y^t\|\cdot[\psi(\D_\gamma(y^t,z^t,x^t) - l^*) - \psi(\D_\gamma(y^{t+1},z^{t+1},x^{t+1}) - l^*)]\\
      &\ge  {\rm dist}(0,\partial \D_\gamma(y^t,z^t,x^t))\cdot[\psi(\D_\gamma(y^t,z^t,x^t) - l^*) - \psi(\D_\gamma(y^{t+1},z^{t+1},x^{t+1}) - l^*)]\\
      &\ge  {\rm dist}(0,\partial \D_\gamma(y^t,z^t,x^t))\cdot\psi'(\D_\gamma(y^t,z^t,x^t) - l^*)\cdot[\D_\gamma(y^t,z^t,x^t) - \D_\gamma(y^{t+1},z^{t+1},x^{t+1})]\\
      &\ge  K \|y^{t+1} - y^t\|^2,
    \end{split}
  \end{equation*}
  from which \eqref{xbd} follows immediately.

  We next show that $y^t\in B_\rho$ whenever $t\ge N$ by induction.
  The claim is true for $t=N$ by construction.
  Now, suppose the claim is true for $t=N,\ldots,N+k-1$ for some $k\ge 1$; i.e., $y^N,\ldots,y^{N+k-1}\in B_\rho$. Notice that as $\{\D_\gamma(y^t,z^t,x^t)\}$ is a non-increasing sequence, our choice of $N$ implies that $l^*<\D_\gamma(y^t,z^t,x^t)<l^* + \nu$ for all $t \ge N$. In particular, \eqref{xbd} can be applied for $t = N,\ldots,N+k-1$. Thus, for $t = N+k$, we have from this observation that
  \begin{equation*}
    \begin{split}
      &\|y^{N+k} - y^*\| \le \|y^N - y^*\| + \sum_{j=1}^{k}\|y^{N+j}-y^{N+j-1}\|\\
      & \le \|y^N - y^*\| + \frac \tau K\sum_{j=1}^{k}[\psi(\D_\gamma(y^{N+j-1},z^{N+j-1},x^{N+j-1}) - l^*) - \psi(\D_\gamma(y^{N+j},z^{N+j},x^{N+j}) - l^*)]\\
      & \le \|y^N - y^*\| + \frac \tau K\psi(\D_\gamma(y^{N},z^{N},x^{N}) - l^*) < \rho,
    \end{split}
  \end{equation*}
  where the first inequality in the last line follows from the nonnegativity of $\psi$. Thus, we have shown that $y^t\in B_\rho$ for $t\ge N$ by induction.

  Since $y^t\in B_\rho$ and $l^*<\D_\gamma(y^t,z^t,x^t)<l^* + \nu$ for $t\ge N$, we can sum \eqref{xbd} from $t=N$ to $M\rightarrow \infty$, showing that $\{\|y^{t+1}-y^t\|\}$ is summable. Convergence of $\{y^t\}$ follows immediately from this. Convergence of $\{x^t\}$ follows from this and the first relation in \eqref{fyupdate}. Finally, the convergence of $\{z^t\}$ follows from the third relation in \eqref{scheme}. This completes the proof.
\end{proof}

\begin{remark}{\bf (Comments on the proof)} Below, we make some comments about the proof of Theorem \ref{thm1.5}.
\begin{enumerate}[{\rm (i)}]
\item Our proof indeed shows that, if the assumptions in Theorem \ref{thm1.5} hold, then the sequence $\{(y^t,z^t,x^t)\}$ generated by the DR splitting method has a finite length, i.e., $$\sum_{t=1}^{\infty}(\|y^{t+1}-y^t\|+\|z^{t+1}-z^t\|+\|x^{t+1}-x^t\|)<+\infty.$$
Precisely, the summability of $\|y^{t+1}-y^{t}\|$ and $\|x^{t+1}-x^t\|$ can be seen from \eqref{xbd} and \eqref{ineqbd}. Moreover, notice from the third relation in \eqref{scheme} that
\[
\begin{split}
\|z^{t+1}-z^t\| & = \|(y^{t+1} + x^{t+1} - x^t) - (y^t + x^t - x^{t-1})\|\\
&\le  \|y^{t+1} - y^t\|+\|x^{t+1} - x^t\|+\|x^t - x^{t-1}\|.
\end{split}
\]
Therefore, the summability of $\|z^{t+1}-z^{t}\|$ follows from the summability of  $\|y^{t+1}-y^{t}\|$ and $\|x^{t+1}-x^t\|$.
 \item  The proof of Theorem~\ref{thm1.5} stays valid as long as the DR merit function $\D_\gamma$ is a KL-function.
We only state the case where $f$ and $g$ are semi-algebraic as this simple sufficient condition can be readily checked.
\end{enumerate}

\end{remark}

Recall from Proposition~\ref{Prop1} that a semi-algebraic function $h$ satisfies the KL inequality with $\psi(s)=c\, s^{1-\theta}$ for some $\theta \in [0,1)$ and $c>0$. We now derive eventual convergence rates of the proposed nonconvex DR splitting method by examining the range of the exponent.

\begin{theorem}{\bf (Eventual convergence rate)}\label{thm3}
Suppose that the step-size parameter $\gamma > 0$ is chosen as in \eqref{gammacond}
  and the sequence $\{(y^t,z^t,x^t)\}$ generated has a cluster point $(y^*,z^*,x^*)$. Suppose in addition that $f$ and $g$ are semi-algebraic functions
  so that the $\psi$ in the KL inequality \eqref{eq:KL_ineq2} takes the form $\psi(s)=c\, s^{1-\theta}$ for some $\theta \in [0,1)$ and $c>0$. Then, we have
\begin{itemize}
\item[{\rm (i)}]  If $\theta=0$, then there exists $t_0 \ge 1$ such that for all $t \ge t_0$, $0 \in \nabla f(z^{t})+\partial g(z^{t})$;
\item[{\rm (ii)}] If $\theta \in (0,\frac{1}{2}]$, then there exist $\eta \in (0,1)$ and $\kappa>0$ so that ${\rm dist}\big(0,\nabla f(z^t)+\partial g(z^t)\big) \le \kappa \, \eta^t$ for all large $t$;
\item[{\rm (iii)}]  If $\theta \in (\frac{1}{2},1)$, then there exists $\kappa > 0$ such that ${\rm dist}\big(0,\nabla f(z^t)+\partial g(z^t)\big) \le \kappa \, t^{-\frac1{4\theta-2}}$ for all large $t$.
\end{itemize}
\end{theorem}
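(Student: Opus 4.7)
My strategy is to reduce the stationarity measure to the consecutive step $\|y^{t+1} - y^t\|$, and then extract its asymptotic rate from the merit-function recursion through the KL exponent $\theta$. From \eqref{eq:stationary} there exists $v^t \in \partial g(z^t)$ with $\nabla f(y^t) + v^t = -\frac{1}{\gamma}(z^t - y^t)$, so by the Lipschitz continuity of $\nabla f$,
\[
\|\nabla f(z^t) + v^t\| \le \Bigl(L + \frac{1}{\gamma}\Bigr)\|z^t - y^t\|.
\]
Using $z^t - y^t = x^t - x^{t-1}$ from the third line of \eqref{scheme} together with \eqref{ineqbd}, this gives
\[
{\rm dist}\big(0,\nabla f(z^t) + \partial g(z^t)\big) \le C_0 \|y^{t+1} - y^t\|
\]
for some constant $C_0 > 0$, so the task reduces to bounding $\|y^{t+1} - y^t\|$.

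Set $r_t := \D_\gamma(y^t,z^t,x^t) - l^*$; by Theorem~\ref{thm1.5}, $r_t \downarrow 0$ and the iterates eventually enter the KL neighborhood ${\cal V}$. The descent estimate \eqref{Dgammadecrease} gives $\|y^{t+1} - y^t\|^2 \le (r_t - r_{t+1})/K$, while \eqref{gradientbd} yields ${\rm dist}(0, \partial \D_\gamma(y^t,z^t,x^t)) \le \tau \|y^{t+1} - y^t\|$. For $\theta > 0$, the KL inequality with $\psi(s) = c s^{1-\theta}$ reads $r_t^\theta \le c(1-\theta)\tau \|y^{t+1} - y^t\|$; squaring and combining with the descent produces the core recursion
\[
r_t^{2\theta} \le C_1 (r_t - r_{t+1}), \qquad C_1 := \frac{[c(1-\theta)\tau]^2}{K},
\]
valid for all $t$ large enough.

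For (i) $\theta = 0$, the KL inequality collapses to ${\rm dist}(0, \partial \D_\gamma(y^t,z^t,x^t)) \ge 1/c$ whenever $l^* < \D_\gamma(y^t,z^t,x^t) < l^* + \nu$, which is incompatible with \eqref{gradientbd} and the fact $\|y^{t+1} - y^t\| \to 0$ from Theorem~\ref{thm1}. Hence $r_t = 0$ for all $t$ sufficiently large, and the constancy argument from the proof of Theorem~\ref{thm1.5} then shows $y^t = z^t$ for all $t \ge t_0$, giving $0 \in \nabla f(z^t) + \partial g(z^t)$ via \eqref{eq:stationary}. For (ii) $\theta \in (0, 1/2]$, for $t$ large enough $r_t < 1$ and $2\theta \le 1$, so $r_t \le r_t^{2\theta}$; the recursion then becomes $r_{t+1} \le (1 - 1/C_1) r_t$, and linear decay of $r_t$ propagates through $\|y^{t+1} - y^t\| \le \sqrt{r_t/K}$ to a linear rate $\eta := \sqrt{1 - 1/C_1} \in (0,1)$. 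For (iii) $\theta \in (1/2, 1)$, a standard polynomial-decay argument applies to $r_t^{2\theta} \le C_1(r_t - r_{t+1})$: with $\phi(s) := s^{1-2\theta}$ and $2\theta - 1 > 0$, a mean-value comparison yields $\phi(r_{t+1}) - \phi(r_t) \ge (2\theta - 1)/C_1$, whence $r_t = O(t^{-1/(2\theta - 1)})$ and $\|y^{t+1} - y^t\| \le \sqrt{r_t/K} = O(t^{-1/(4\theta - 2)})$. Combining with the first-paragraph reduction gives the stated rates.

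The main obstacle is largely one of bookkeeping: ensuring that the KL inequality is applied only at iterates where it is valid, which is secured by the neighborhood convergence from Theorem~\ref{thm1.5}. The case $\theta = 0$ is structurally different from the others; rather than extracting a rate from a recursion, it exploits the incompatibility between the KL lower bound $1/c$ and $\|y^{t+1} - y^t\| \to 0$ to force finite-step termination of $r_t$, an argument that is essentially already present inside the proof of Theorem~\ref{thm1.5}.
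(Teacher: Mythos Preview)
Your proof is correct and follows essentially the same route as the paper: both reduce ${\rm dist}(0,\nabla f(z^t)+\partial g(z^t))$ to $\sqrt{r_t}$ via \eqref{eq:stationary}, \eqref{ineqbd} and the descent estimate, derive the recursion $r_t^{2\theta}\le C_1(r_t-r_{t+1})$ from the KL inequality combined with \eqref{gradientbd} and \eqref{Dgammadecrease}, and then handle the three ranges of $\theta$ by the standard contradiction/linear/integral-comparison arguments. The only cosmetic difference is that for $\theta=0$ the paper reads off the conclusion directly from ${\rm dist}(0,\nabla f(z^t)+\partial g(z^t))\le {\rm const}\cdot\sqrt{r_t}=0$, whereas you go through the constancy argument of Theorem~\ref{thm1.5} to obtain $y^t=z^t$; both are valid.
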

\begin{proof}
Let $l_t=\D_\gamma(y^t,z^t,x^t) - \D_\gamma(y^*,z^*,x^*)$. Then, we have from the proof of Theorem~\ref{thm1.5} that $l_t \ge 0$ for all $t\ge 1$ and $l_t \rightarrow 0$ as $t \rightarrow \infty$. Furthermore, from \eqref{Dgammadecrease}, we have
 \begin{equation}\label{eq:use1}
l_{t}-l_{t+1} \ge K\|y^{t+1} - y^{t}\|^2.
 \end{equation}
As $l_{t+1} \ge 0$, it follows that  $K\|y^{t+1} - y^{t}\|^2 \le l_{t}-l_{t+1} \le l_{t}$ for all $t \ge 1$. This together with \eqref{ineqbd} implies that
\[
\|y^{t}-z^{t}\|=\|x^{t} - x^{t-1}\| \le (1+\gamma L) \|y^{t+1}-y^{t}\| \le \frac{(1+\gamma L)}{\sqrt{K}} \sqrt{l_t},
\]
where the first equality follows from the last relation in \eqref{scheme}. Notice from \eqref{eq:stationary} and the Lipschitz continuity of $\nabla f$ that
\[
{\rm dist}(0,\nabla f(z^t) + \partial g(z^t)) \le \left(L + \frac{1}{\gamma}\right)\|y^t-z^t\|.
\]
Consequently, for all $t \ge 1$,
\begin{equation}\label{eq:distance_bd}
{\rm dist}(0,\nabla f(z^t) + \partial g(z^t))  \le \frac{(1+\gamma L)^2}{\gamma \sqrt{K}} \sqrt{l_t}.
\end{equation}
Moreover, from the convergence of $\{(y^t,z^t,x^t)\}$ to $(y^*,z^*,x^*)$ guaranteed by Theorem~\ref{thm1.5}, the relation \eqref{eq:KL_ineq2}, and the discussion that precedes it, we see that either
\begin{enumerate}[{\rm Case} (i):]
  \item there exists a $t_0 \ge 1$ such that $l_t=0$ for some and hence all $t \ge t_0$; or
  \item for all large $t$, we have $l_t > 0$ and
\begin{equation}\label{eq:KL_ineq3}
c\, (1-\theta) l_t^{-\theta} {\rm dist}\left(0,\partial \D_\gamma(y^t,z^t,x^t)\right) \ge 1.
\end{equation}
\end{enumerate}
For Case (i), \eqref{eq:distance_bd} implies that the conclusion follows trivially. Therefore, we consider Case (ii).
From \eqref{gradientbd}, we obtain that
${\rm dist}(0,\partial \D_\gamma(y^t,z^t,x^t)) \le \tau\|y^{t+1} - y^{t}\|$.
It then follows that
\begin{equation}\label{eq:use2}
\|y^{t+1} - y^{t}\| \ge \frac{1}{c\, (1-\theta)\tau} \, l_t^{\theta}.
\end{equation}
Therefore, combining \eqref{eq:use1} and \eqref{eq:use2}, we see that
\[
l_{t}-l_{t+1} \ge M l_t^{2 \theta} \mbox{ for all large } t,
\]
where $M=K (\frac{1}{c\, (1-\theta)\tau})^2$. We now divide the discussion into three cases:

Case 1: $\theta =0$. In this case, we have $l_{t}-l_{t+1} \ge M>0$, which contradicts $l_t\rightarrow 0$. Thus, this case cannot happen.

Case 2: $\theta \in (0,\frac{1}{2}]$. In this case, as $l_t \rightarrow 0$, there exists $t_1 \ge 1$ such that $l_t^{2\theta} \ge l_t$ for all $t \ge t_1$. Then, for all large $t$
\begin{equation}\label{Q-linear}
l_{t+1} \le l_t- M l_t^{2 \theta} \le (1-M) l_t.
\end{equation}
From this and the positivity of $l_t$, we see immediately that $1 - M > 0$ and
that there exists $\mu>0$ such that  $l_t \le \mu (1-M)^{t}$ for all large $t$. This together with \eqref{eq:distance_bd}
 implies that the conclusion of Case 2 follows with $\kappa=\frac{\sqrt{\mu}(1+\gamma L)^2}{\gamma \sqrt{K}}$ and $\eta=\sqrt{1-M} \in (0,1)$.

Case 3: $\theta \in (\frac{1}{2},1)$. Define the non-increasing function $h:(0,+\infty) \rightarrow\R$ by   $h(s):=s^{-2\theta}$. As there exists $i_0 \ge 1$ such that $ M \, h(l_i)^{-1}= M l_i^{2\theta} \le l_i-l_{i+1}$ for all $i \ge i_0$,
then we get, for all $i \ge i_0$,
\[
M \le
(l_{i}-l_{i+1})h(l_i)\le
\int_{l_{i+1}}^{l_i}h(s)ds=
\frac{l_{i}^{1-2\theta}-l_{i+1}^{1-2\theta}}{1-2\theta}=\frac{l_{i+1}^{1-2\theta}-l_{i}^{1-2\theta}}{2\theta-1}.
\]
Noting that $2\theta-1 >0$, this implies that for all $i \ge i_0$
\begin{equation*}
l_{i+1}^{1-2\theta}-l_{i}^{1-2\theta}\ge M (2\theta-1).
\end{equation*}
Summing for all $i=i_0$ to $i=t-1$ we have for all large $t$
\[
l_{t}^{1-2\theta}-l_{i_0}^{1-2\theta} \ge M(2\theta-1) (t-i_0).
\]
This gives us that for all large $t$
\[
l_t \le \frac{1}{\sqrt[2\theta-1]{l_{i_0}^{1-2\theta}+M(2\theta-1) (t-i_0)}} \ .
\]
So, combining this with \eqref{eq:distance_bd}, we see that the conclusion of Case 3 follows.
\end{proof}

\begin{remark}{\bf (Comments on Theorem~\ref{thm3})}\label{rem:thm3}
  \begin{enumerate}[{\rm (i)}]
    \item A closer look at the proof of Theorem~\ref{thm3} reveals that one only needs \eqref{eq:KL_ineq2} to hold with $\psi(s)=c\, s^{1-\theta}$ for some $\theta \in [0,1)$ and $c>0$ along the sequence $\{(y^t,z^t,x^t)\}$ generated by \eqref{scheme} for all sufficiently large $t$.
    \item For Theorem~\ref{thm3} to be informative for a particular instance, one has to give an explicit estimate of $\theta$. As an example, we will show in Proposition~\ref{prop:1} below that under a constraint qualification, we can have $\theta=\frac12$ for the feasibility problem under consideration.
    Moreover, there is some recent work devoted to providing an explicit estimate of $\theta$ in the KL inequality when the semi-algebraic
    function has a specific structure. For example, if the DR merit function can be expressed as a maximum of finitely many polynomials $h_i$, $i=1,\ldots,q$, (this happens when $f$ and $g$ can be expressed as maximums of finitely many polynomials),
    then \cite[Theorem 3.3]{Li_Bor_Pham} provides an explicit estimate of the exponent $\theta$ in terms of the degrees of the polynomials $h_i$ and the dimension
    of the underlying space.
  \end{enumerate}
\end{remark}

All our preceding convergence results rely on the existence of a cluster point. Before ending this section, we give some simple sufficient conditions that will guarantee the sequence generated from the DR splitting method is bounded.
As we will see in the next section, these simple sufficient conditions can be easily satisfied for nonconvex feasibility problem under mild assumptions.

\begin{theorem}{\bf (Boundedness of the sequence generated from the DR splitting method)}\label{thm3.5}
  Suppose that $\gamma$ is chosen to satisfy \eqref{gammacond}. Suppose in addition that $f$ and $g$ are both bounded from below, and
  that at least one of them is coercive.
  Then the sequence $\{(y^t,z^t,x^t)\}$ generated from \eqref{scheme} is bounded.
\end{theorem}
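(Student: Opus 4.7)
The plan is to combine the non-increasing property of the Douglas-Rachford merit function from Theorem~\ref{thm1} with a rewriting of $\D_\gamma$ along the iterates that exploits the stationarity relation in \eqref{fyupdate}. By Theorem~\ref{thm1}, $\D_\gamma(y^t,z^t,x^t) \le \D_\gamma(y^1,z^1,x^1) =: M$ for every $t \ge 1$, so if I can lower bound $\D_\gamma(y^t,z^t,x^t)$ by a quantity that is coercive in $(y^t,z^t,x^t)$ (modulo bounded terms), boundedness will follow.

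First I would rewrite $\D_\gamma(y^t,z^t,x^t)$ in a way that removes the sign-indefinite term $-\|x-z\|^2$. Shifting the first relation in \eqref{fyupdate} by one index gives $x^{t-1} - y^t = \gamma\nabla f(y^t)$ for all $t\ge 1$, and combining with the $x$-update $x^t = x^{t-1} + (z^t - y^t)$ yields $x^t - y^t = \gamma\nabla f(y^t) + (z^t - y^t)$ and $x^t - z^t = \gamma\nabla f(y^t)$. Substituting into the second line of \eqref{Drelation}, the $\gamma^2\|\nabla f(y^t)\|^2$ contributions cancel, producing
\[
\D_\gamma(y^t,z^t,x^t) = f(y^t) + g(z^t) + \langle \nabla f(y^t), z^t - y^t\rangle + \frac{1}{2\gamma}\|z^t-y^t\|^2.
\]
Next I apply the descent lemma $f(z^t) \le f(y^t) + \langle\nabla f(y^t),z^t - y^t\rangle + \frac{L}{2}\|z^t-y^t\|^2$ to produce the lower bound
\[
\D_\gamma(y^t,z^t,x^t) \ge f(z^t) + g(z^t) + \frac{1-\gamma L}{2\gamma}\|z^t-y^t\|^2.
\]
By \eqref{gammaupperbound} we have $\gamma < 1/L$, so the last coefficient is strictly positive.

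Combining $\D_\gamma(y^t,z^t,x^t) \le M$ with the lower boundedness of both $f$ and $g$ then gives simultaneously an upper bound on $f(z^t) + g(z^t)$ and a uniform bound on $\|z^t - y^t\|$. Since one of $f, g$ is coercive while the other is bounded below, the upper bound on the sum forces the coercive function to take bounded values along $\{z^t\}$, so $\{z^t\}$ is bounded. The uniform bound on $\|z^t - y^t\|$ then gives boundedness of $\{y^t\}$, and the identity $x^t = z^t + \gamma\nabla f(y^t)$ together with the Lipschitz continuity of $\nabla f$ gives boundedness of $\{x^t\}$.

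The main obstacle is the first step: $\D_\gamma$ contains the sign-indefinite contribution $-\frac{1}{2\gamma}\|x-z\|^2$, and without the precise cancellation furnished by the optimality identity $x^t - z^t = \gamma\nabla f(y^t)$, the upper bound from Theorem~\ref{thm1} cannot be directly converted into a bound on $(y^t,z^t,x^t)$. Once this rewriting is in place, the descent lemma provides exactly the right absorption to expose the coercive quantity $f(z^t)+g(z^t)$ and close the argument.
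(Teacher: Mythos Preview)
Your proof is correct but follows a different line than the paper's. The paper starts from the same rewriting via the $x$-update, obtaining
\[
\D_\gamma(y^t,z^t,x^t) = f(y^t) + g(z^t) - \tfrac{\gamma}{2}\|\nabla f(y^t)\|^2 + \tfrac{1}{2\gamma}\|x^t - y^t\|^2,
\]
but instead of applying the descent lemma to shift $f$ to $z^t$, it absorbs the negative gradient term using the inequality $f(y) - \tfrac{1}{2L}\|\nabla f(y)\|^2 \ge \inf f$ (itself a consequence of the descent lemma applied at $y - \tfrac{1}{L}\nabla f(y)$). This leaves a lower bound of the form $\mu f(y^t) + g(z^t) + \text{(positive terms)}$, which requires introducing an auxiliary parameter $\mu\in(0,1)$ with $\tfrac{1-\mu}{L} > \gamma$, and then a case split: when $g$ is coercive one first bounds $\{z^t\}$ and then chases back through \eqref{scheme} to bound $\{y^t\}$ and $\{x^t\}$, whereas when $f$ is coercive one first bounds $\{y^t\}$.

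Your approach sidesteps both the auxiliary parameter and the case split: by moving the evaluation of $f$ from $y^t$ to $z^t$ via the descent lemma, you obtain $f(z^t)+g(z^t)$ together in the lower bound, so coercivity of either function immediately bounds $\{z^t\}$, and the remaining sequences follow uniformly. This is cleaner and arguably more elementary. The paper's route, on the other hand, yields as a byproduct explicit bounds on $\|\nabla f(y^t)\|$ and $\|x^t - y^t\|$ separately, which your argument does not produce directly (though they are of course recoverable a posteriori).
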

\begin{proof}
  Since $f$ is bounded from below, say, by $\zeta_* > -\infty$, we have for any $x$ that
  \begin{equation}\label{gradcond}
    \begin{split}
      \zeta_*  \le f\left(x - \frac1L \nabla f(x)\right) &\le f(x) + \left\langle\nabla f(x), \left(x - \frac1L \nabla f(x)\right) - x\right\rangle + \frac{L}2\left\|\left(x - \frac1L \nabla f(x)\right) - x\right\|^2\\
      & = f(x) - \frac{1}{2L}\|\nabla f(x)\|^2,
    \end{split}
  \end{equation}
  where the second inequality follows from the Taylor series expansion and the Lipschitz continuity of the gradient of $f$.
  Next, we have from the assumption on $\gamma$ and Theorem~\ref{thm1} that for all $t\ge 1$,
  \begin{equation}\label{bd2}
    \D_\gamma(y^t,z^t,x^t) \le \D_\gamma(y^1,z^1,x^1).
  \end{equation}
  In addition, using the second relation in \eqref{Drelation}, we have for $t\ge 1$ that
  \begin{equation}\label{ineq22}
  \begin{split}
   \D_\gamma(y^t,z^t,x^t)
    & = f(y^t) + g(z^t) - \frac1{2\gamma}\|x^t - z^t\|^2 + \frac1{2\gamma}\|x^t - y^t\|^2\\
    & = f(y^t) + g(z^t) - \frac1{2\gamma}\|x^{t-1} - y^t\|^2 + \frac1{2\gamma}\|x^t - y^t\|^2,
  \end{split}
  \end{equation}
  where the last equality follows from the definition of $x^{t+1}$, i.e., the third relation in \eqref{scheme}.
  Moreover, from the first relation in \eqref{fyupdate}, we have for $t\ge 1$ that
  \begin{equation}\label{relation}
  0 = \nabla f(y^t) + \frac1\gamma(y^t - x^{t-1}),
  \end{equation}
  which implies that $\|x^{t-1} - y^t\|^2 = \gamma^2\|\nabla f(y^t)\|^2$. Furthermore, notice that because of \eqref{gammaupperbound}, we can choose $\mu\in (0,1)$ so that $\frac{1-\mu}L > \gamma$. Combining these with \eqref{ineq22} and \eqref{bd2}, we obtain further that
  \begin{equation}\label{keyineq22}
  \begin{split}
   \D_\gamma(y^1,z^1,x^1) &\ge \D_\gamma(y^t,z^t,x^t) = f(y^t) + g(z^t) - \frac1{2\gamma}\|x^{t-1} - y^t\|^2 + \frac1{2\gamma}\|x^t - y^t\|^2\\
  & = f(y^t) - \frac\gamma{2}\|\nabla f(y^t)\|^2 + g(z^t) + \frac1{2\gamma}\|x^t - y^t\|^2\\
  & = \mu f(y^t) + (1-\mu)f(y^t) - \frac\gamma{2}\|\nabla f(y^t)\|^2 + g(z^t) + \frac1{2\gamma}\|x^t - y^t\|^2\\
  & \ge \mu f(y^t) + (1 - \mu)\zeta_* + \frac12\left(\frac{1- \mu}{L} - \gamma \right)\|\nabla f(y^t)\|^2 + g(z^t) + \frac1{2\gamma}\|x^t - y^t\|^2,
  \end{split}
  \end{equation}
  where the last inequality follows from \eqref{gradcond}.

  Now, suppose first that $g$ is coercive. Then it follows readily from \eqref{keyineq22} that $\{z^t\}$, $\{\nabla f(y^t)\}$ and $\{x^t - y^t\}$ are bounded. From \eqref{relation} we see immediately that $\{y^t - x^{t-1}\}$ is also bounded. This together with the boundedness of $\{x^t - y^t\}$ shows that $\{x^t - x^{t-1}\}$ is also bounded. From the third relation in \eqref{scheme}, this means that $\{z^t - y^t\}$ is bounded. Since we know already that $\{z^t\}$ is bounded, it follows that $\{y^t\}$ is also bounded. The boundedness of $\{x^t\}$ now follows from this and the boundedness of $\{x^t - y^t\}$.

  Finally, suppose that $f$ is coercive. Then we see immediately from \eqref{keyineq22} that $\{y^t\}$ and $\{x^t - y^t\}$ are bounded. Consequently, the sequence $\{x^t\}$ is also bounded. The boundedness of $\{z^t\}$ then follows from the third relation in \eqref{scheme}. This completes the proof.
\end{proof}


\section{Douglas-Rachford splitting for nonconvex feasibility problems}\label{sec4}

In this section, we discuss how the nonconvex DR splitting method in Section~\ref{sec3} can be applied to solving a feasibility problem.

Let $C$ and $D$ be two nonempty closed sets, with $C$ being convex. We also assume that a projection onto each of them is easy to compute. The feasibility problem is to find a point in $C\cap D$, if any.
It is clear that $C\cap D\neq \emptyset$ if and only if the following optimization problem has a zero optimal value:
\begin{equation}\label{prob:feas}
  \begin{array}{rl}
    \min\limits_u & \frac12 d_C^2(u)\\
    {\rm s.t.} & u\in D.
  \end{array}
\end{equation}
Since $C$ is closed and convex, it is well known that the function $u\mapsto \frac12 d_C^2(u)$ is smooth with a Lipschitz continuous gradient whose Lipschitz continuity modulus is $1$; see, for example, \cite[Corollary~12.30]{BauCom11}. \footnote{We note that, a more general
and informative statement that applies to the square distance function of a possibly nonconvex but prox-regular set can be found in \cite[Theorem 1.3]{PRT00}.} Moreover, for each $\gamma > 0$, one can observe that
\begin{equation*}
    \inf_y\left\{\frac12 d_C^2(y) + \frac1{2\gamma}\|y - x\|^2\right\}  = \inf_{c\in C}\inf_y\left\{\frac12\|y - c\|^2 + \frac1{2\gamma}\|y - x\|^2\right\}.
\end{equation*}
The first-order optimality condition of the inner optimization problem on the right gives $y = \frac{x + \gamma c}{1 + \gamma}$. Using this expression, one can further simplify the expression on the right to obtain the following:
\begin{equation*}
  \inf_y\left\{\frac12 d_C^2(y) + \frac1{2\gamma}\|y - x\|^2\right\} = \inf_{c\in C}\frac{1}{2(1 + \gamma)}\|x - c\|^2,
\end{equation*}
with the infimum on the right attained at $c = P_C(x)$. Consequently, we have shown that
\[
\frac{1}{1+\gamma}(x + \gamma P_C(x)) = \argmin\limits_{y}\left\{\frac12 d_C^2(y) + \frac1{2\gamma}\|y - x\|^2\right\}.
\]
Hence, applying the DR splitting method in Section~\ref{sec3} to solving \eqref{prob:feas} gives the following algorithm:\\ \

\fbox{\parbox{5.7 in}{
\begin{description}
\item {\bf Douglas-Rachford splitting method for feasibility problem}

\item[Step 0.] Input an initial point $x^0$ and a step-size parameter $\gamma > 0$.

\item[Step 1.] Set
\begin{equation}\label{scheme2}
\left\{
\begin{split}
&y^{t+1} = \frac{1}{1+\gamma}(x^t + \gamma P_C(x^t)), \\
&z^{t+1}\in\Argmin_{z\in D} \left\{\|2y^{t+1} - x^t - z\|^2\right\},\\
&x^{t+1}=x^t+(z^{t+1}- y^{t+1}).
\end{split}
\right.
\end{equation}

\item[Step 2.] If a termination criterion is not met, go to Step 1.
\end{description}
}}
\vspace{.1 in}

Notice that the above algorithm involves a computation of $P_C(x^t)$ and a projection of $2y^{t+1}-x^t$ onto $D$, which are both easy to compute by assumption.
 Moreover, observe that as $\gamma\rightarrow \infty$, \eqref{scheme2} reduces to the classical DR splitting method considered in the literature for finding
 a point in $C\cap D$, i.e., the DR splitting method in \eqref{scheme} applied to minimizing the sum of the indicator functions of $C$ and $D$.
 Comparing with this, the version in \eqref{scheme2} can be viewed as a {\em damped} DR splitting method for finding feasible points.
 It is also worth noting that \eqref{scheme2} was
studied in \cite{Luke08} where the author showed that this algorithm is equivalent to the relaxed averaged alternating
reflections algorithm with a suitable choice of the parameter.

The global convergence of both the classical DR splitting method and \eqref{scheme2} are known in the convex scenario, i.e, when $D$ is also convex.
However, in our case, $C$ is closed and convex while $D$
is possibly nonconvex. Thus, the known results do not apply directly.
Nonetheless, we have the following convergence result of \eqref{scheme2} using Theorem~\ref{thm1}. In addition, we can show in this particular case that the sequence $\{(y^t,z^t,x^t)\}$ generated from \eqref{scheme2} is bounded, assuming $C$ or $D$ is compact.

\begin{theorem}\label{thm2} {\bf (Convergence of DR splitting method for nonconvex feasibility problem involving two sets)}
  Suppose that $C$ is a nonempty closed convex set and $D$ is a nonempty closed set, and that either $C$ or $D$ is compact. Suppose in addition that $0<\gamma < \sqrt{\frac32} - 1$. Then the sequence $\{(y^t,z^t,x^t)\}$ generated from \eqref{scheme2} is bounded,
  and any cluster point $(y^*,z^*,x^*)$ of the sequence satisfies $z^* = y^*$, and $z^*$ is a stationary point of \eqref{prob:feas}. Moreover, \eqref{cond1} holds.
\end{theorem}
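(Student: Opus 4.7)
The plan is to recognize that \eqref{prob:feas} fits precisely the framework of Section~\ref{sec3} with $f(u)=\tfrac12 d_C^2(u)$ and $g(u)=\delta_D(u)$, and then to apply Theorems~\ref{thm3.5} and~\ref{thm1} to obtain all three conclusions with essentially no additional work. First, since $C$ is closed and convex, $d_C$ is convex and nonnegative, so $f$ is convex with Lipschitz continuous gradient of modulus $L=1$, as already recorded before \eqref{scheme2}. In particular, the choice $l=0$ is admissible in the convention that $f+\tfrac{l}{2}\|\cdot\|^2$ is convex. Plugging $L=1$ and $l=0$ into \eqref{gammacond} reduces it to $(1+\gamma)^2<\tfrac{3}{2}$, which is exactly $0<\gamma<\sqrt{3/2}-1$. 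Hence the hypothesis on $\gamma$ triggers all the conclusions of Theorems~\ref{thm1} and~\ref{thm3.5}.

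Next, I would verify the hypotheses of Theorem~\ref{thm3.5}. Both $f\ge 0$ and $g=\delta_D\ge 0$ are bounded below. If $C$ is compact, then $d_C(x)\ge \|x\|-\sup_{c\in C}\|c\|\to\infty$ as $\|x\|\to\infty$, so $f$ is coercive; if instead $D$ is compact, then $\delta_D(x)=+\infty$ for all sufficiently large $\|x\|$, so $g$ is coercive. In either case Theorem~\ref{thm3.5} applies and yields boundedness of $\{(y^t,z^t,x^t)\}$, which in turn guarantees the existence of cluster points.

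Finally, Theorem~\ref{thm1} delivers \eqref{cond1} and tells us that for any cluster point $(y^*,z^*,x^*)$ we have $z^*=y^*$ together with $0\in \nabla f(z^*)+\partial g(z^*)$. Since $C$ is closed and convex one has $\nabla f(z^*)=z^*-P_C(z^*)$, and by definition $\partial g(z^*)=\partial\delta_D(z^*)=N_D(z^*)$; thus the inclusion rewrites as $0\in(z^*-P_C(z^*))+N_D(z^*)$, which is precisely the first-order stationarity condition for \eqref{prob:feas}. I expect the whole argument to be a short reduction; the only genuinely new item to verify is that the numerical threshold $\sqrt{3/2}-1$ in the statement coincides with the reduced form of \eqref{gammacond} when $L=1$ and $l=0$, which is the straightforward computation above, so I anticipate no substantial obstacle.
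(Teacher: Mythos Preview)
Your proposal is correct and follows essentially the same approach as the paper: identify \eqref{prob:feas} as an instance of \eqref{P1} with $f=\tfrac12 d_C^2$, $g=\delta_D$, $L=1$, $l=0$, verify that \eqref{gammacond} reduces to $0<\gamma<\sqrt{3/2}-1$, then invoke Theorem~\ref{thm3.5} for boundedness and Theorem~\ref{thm1} for the remaining conclusions. Your write-up is in fact slightly more detailed than the paper's (you spell out the coercivity arguments and the explicit form of the stationarity condition), but the logical route is identical.
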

\begin{proof}
  From the above discussion, the algorithm \eqref{scheme2} is just \eqref{scheme} as applied to \eqref{prob:feas}. Thus, in particular, one can pick $L = 1$ and $l = 0$ using properties of $\frac12 d_C^2$.
  In view of Theorem~\ref{thm1}, we only need to show that the sequence $\{(y^t,z^t,x^t)\}$ is bounded. We shall check that the conditions in Theorem~\ref{thm3.5} are satisfied.

  First, $f = \frac12 d_C^2$ and $g = \delta_D$ are clearly bounded from below.
  Now, if $D$ is compact, then $g = \delta_D$ is coercive. On the other hand, if $C$ is compact, then $f = \frac12 d_C^2$ is coercive. Consequently, Theorem~\ref{thm3.5} is applicable, from which we conclude that the sequence $\{(y^t,z^t,x^t)\}$ generated from \eqref{scheme2} is bounded.
\end{proof}

We have the following immediate corollary if $C$ and $D$ are closed semi-algebraic sets.

\begin{corollary}\label{cor1}
Let $C$ and $D$ be nonempty closed semi-algebraic sets, with $C$ being convex.  Suppose that $0<\gamma < \sqrt{\frac32} - 1$  and that either $C$ or $D$ is compact. Then the sequence $\{(y^t,z^t,x^t)\}$ converges to a point $(y^*,z^*,x^*)$ which satisfies $z^* = y^*$, and $z^*$ is a stationary point of \eqref{prob:feas}.
\end{corollary}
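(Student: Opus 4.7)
The plan is to verify that the hypotheses of Theorems~\ref{thm2} and~\ref{thm1.5} are both satisfied under the assumptions of the corollary, and then simply concatenate their conclusions. First I would set $f = \frac{1}{2}d_C^2$ and $g = \delta_D$, and recall from the derivation preceding \eqref{scheme2} that the algorithm \eqref{scheme2} is exactly the DR scheme \eqref{scheme} applied to \eqref{prob:feas}. Since $C$ is closed convex, $f$ is $C^1$ with $1$-Lipschitz gradient and is itself convex, so one can take $L = 1$ and $l = 0$. For these parameters the inequality \eqref{gammacond} reduces to $(1+\gamma)^2 - \tfrac{3}{2} < 0$, which is equivalent to $0 < \gamma < \sqrt{3/2} - 1$. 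So the step-size hypothesis of both theorems is in force.

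Next, I would invoke Theorem~\ref{thm2}: the compactness of $C$ or of $D$ gives boundedness of $\{(y^t,z^t,x^t)\}$, and therefore the existence of at least one cluster point $(y^*,z^*,x^*)$ satisfying $y^* = z^*$ together with $0 \in \nabla f(z^*) + \partial g(z^*)$, which is precisely the stationarity condition for \eqref{prob:feas}.

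To promote subsequential convergence to convergence of the whole sequence, I would apply Theorem~\ref{thm1.5}. The step-size condition and existence of a cluster point have already been established; it remains only to verify that $f = \frac{1}{2}d_C^2$ and $g = \delta_D$ are semi-algebraic. Semi-algebraicity of $D$ immediately gives semi-algebraicity of $\delta_D$, since its graph is a Boolean combination of $D \times \{0\}$ and $D^c \times \{+\infty\}$. For $f$, the graph $\{(x,t) : t = \tfrac{1}{2}\min_{c \in C}\|x-c\|^2\}$ is defined by a first-order formula in the language of ordered fields using only polynomial (in)equalities and the semi-algebraic set $C$, so the Tarski--Seidenberg theorem makes it semi-algebraic. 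Theorem~\ref{thm1.5} then yields convergence of the entire sequence $\{(y^t,z^t,x^t)\}$ to a limit $(y^*,z^*,x^*)$, and the properties $z^* = y^*$ and stationarity of $z^*$ carry over from Theorem~\ref{thm2}.

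There is no genuine obstacle here; the corollary is effectively a one-line concatenation of Theorems~\ref{thm2} and~\ref{thm1.5} once the step-size bound is translated into \eqref{gammacond}. The only conceptual point to flag is the (routine) semi-algebraicity of the squared distance function, which relies on Tarski--Seidenberg rather than a direct polynomial description.
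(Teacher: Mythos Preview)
Your proposal is correct and follows essentially the same approach as the paper: set $f=\tfrac12 d_C^2$, $g=\delta_D$, verify semi-algebraicity of both, and then concatenate Theorem~\ref{thm2} (for boundedness and hence existence of a cluster point) with Theorem~\ref{thm1.5} (for convergence of the whole sequence). The only cosmetic difference is that the paper cites \cite[Lemma~2.3]{AtBoSv13} for the semi-algebraicity of $\tfrac12 d_C^2$, whereas you invoke Tarski--Seidenberg directly; both are fine.
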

\begin{proof}
As $C$ is a semi-algebraic set, $y \mapsto \frac12 d_C^2(y)$ is a semi-algebraic function (see \cite[Lemma 2.3]{AtBoSv13}). Note that $D$ is also a semi-algebraic set, and so, $z \mapsto \delta_D(z)$ is also a semi-algebraic function. Thus, the conclusion follows from Theorem~\ref{thm2} and Theorem~\ref{thm1.5} with $f(y)=\frac12 d_C^2(y)$ and $g(z)=\delta_D(z)$.
\end{proof}
\begin{remark}\label{rem1}{\bf (Practical computation consideration on the step-size parameter)}
Though the upper bound on $\gamma$ given in \eqref{gammacond} might be too small in practice, it can be used in designing an update rule of $\gamma$ so that the resulting algorithm is guaranteed to converge (in the sense described by Theorem~\ref{thm2}). Indeed, similar to the discussion in \cite[Remark~2.1]{SunTohYang14}, one could initialize the algorithm with a large $\gamma$, and decrease the $\gamma$ by a constant ratio if $\gamma$ exceeds $\sqrt{\frac32} - 1$ and the iterate satisfies either $\|y^t - y^{t-1}\| > c_0/t$ for some prefixed $c_0>0$ or $\|y^t\| > c_1$ for some huge number $c_1>0$. In the worst case, one can obtain $0<\gamma < \sqrt{\frac32} - 1$ after finitely many decrease, and Theorem~\ref{thm2} shows that the sequence generated is bounded and clusters at stationary points when either $C$ or $D$ is compact. Otherwise, one must have $\|y^t - y^{t-1}\| \le c_0/t$ and $\|y^t\| \le c_1$ for all sufficiently large $t$. In this case, it follows from the first relation in \eqref{fyupdate} and the third relation in \eqref{scheme} that $\{x^t\}$ and $\{z^t\}$ are bounded. Moreover, we also see from \eqref{ineqbd}
that \eqref{cond1} holds. Thus, one can also show that the sequence generated is bounded and clusters at stationary points.
\end{remark}

In general, it is possible that the algorithm \eqref{scheme2} gets stuck at a stationary point that is not a global minimizer. Thus, there is no guarantee that this algorithm will solve the feasibility problem. However, a zero objective value of $d_C(y^*)$ certifies that $y^*$ is a solution of the feasibility problem, i.e., $y^*\in C\cap D$.

We next consider a specific case where $C = \{x\in \R^n:\; Ax = b\}$ for some matrix $A\in \R^{m\times n}$, $m \le n$, and $D$ is a closed semi-algebraic set. We show below that, if the $\{(y^t,z^t,x^t)\}$ generated by our DR splitting method converges to some $(y^*,z^*,x^*)$ with $z^*$ satisfying a certain constraint qualification, then the scheme indeed exhibits a local linear convergence rate.
To do this, we first prove an auxiliary lemma.
\begin{lemma}\label{lemma:bound}
Let $B \in \mathbb{R}^{p \times p}$ be a symmetric indefinite matrix. Then, there exists $\alpha>0$ such that for all $u \in \mathbb{R}^p$,
\[
 \|Bu\|^2 \ge \alpha \, (u^TBu).
\]
\end{lemma}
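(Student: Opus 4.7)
The plan is to reduce the inequality to a coordinate-wise statement via the spectral theorem, since $B$ is symmetric. Write $B = Q\Lambda Q^T$ with $Q\in\R^{p\times p}$ orthogonal and $\Lambda=\mathrm{diag}(\lambda_1,\ldots,\lambda_p)$. Making the change of variables $v = Q^T u$, we have $u^TBu = \sum_{i=1}^p \lambda_i v_i^2$ and $\|Bu\|^2 = \|\Lambda v\|^2 = \sum_{i=1}^p \lambda_i^2 v_i^2$, so the desired inequality becomes
\[
\sum_{i=1}^p \lambda_i^2 v_i^2 \;\ge\; \alpha \sum_{i=1}^p \lambda_i v_i^2 \qquad \text{for all } v\in\R^p.
\]

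Next I would split the index set $\{1,\ldots,p\}$ according to the sign of $\lambda_i$. For indices with $\lambda_i\le 0$, the term on the right satisfies $\alpha\lambda_i v_i^2 \le 0$ while $\lambda_i^2 v_i^2\ge 0$, so those indices contribute favorably for any $\alpha>0$. For indices with $\lambda_i>0$, the comparison of the $i$th terms reduces to $\lambda_i \ge \alpha$. Because $B$ is indefinite, there is at least one positive eigenvalue; let $\lambda_{\min}^{+}$ denote the smallest of them. Then choosing any $\alpha\in(0,\lambda_{\min}^{+}]$ yields $\lambda_i^2 v_i^2 \ge \alpha\lambda_i v_i^2$ coordinate by coordinate, and summing gives the conclusion.

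There is essentially no obstacle here; the only point to be careful about is the degenerate coordinates. If $\lambda_i=0$ both sides of the coordinate inequality vanish, so these indices are harmless. The hypothesis that $B$ is indefinite (rather than merely negative semidefinite) is used only to guarantee that $\lambda_{\min}^{+}$ exists so that a concrete positive $\alpha$ can be exhibited; if no positive eigenvalue were present, the inequality would still hold for any $\alpha>0$ trivially since the right-hand side would be nonpositive and the left-hand side nonnegative.
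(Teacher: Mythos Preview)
Your proof is correct. Both you and the paper reduce to the eigenvalue decomposition $B=Q\Lambda Q^T$, but the subsequent arguments differ in style. The paper defines $\alpha=\inf\{\|Bu\|^2:\; u^TBu=1\}$ and shows $\alpha>0$ by contradiction: a minimizing sequence with $\sum_i\lambda_i^2(w_i^t)^2\to 0$ would force each $\lambda_i w_i^t\to 0$, contradicting $\sum_i\lambda_i(w_i^t)^2=1$. Your argument is instead a direct term-by-term comparison after diagonalization, yielding the explicit constant $\alpha=\lambda_{\min}^{+}$. Your route is shorter and constructive, while the paper's variational formulation identifies $\alpha$ as the optimal constant in the inequality (restricted to the cone $\{u^TBu>0\}$), which is conceptually pleasant but not needed for the application.
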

\begin{proof}
As $B$ is indefinite, $\{u: u^TBu=1\} \neq \emptyset$. Consider the following homogeneous quadratic optimization problem:
\begin{equation}\label{GTRS}
  \begin{array}{rl}
    \alpha = \inf\limits_{u \in \R^{p}} & \|Bu\|^2\\
    {\rm s.t.} & u^TBu = 1.
  \end{array}
\end{equation}
Clearly $\alpha \ge 0$. We now claim that $\alpha>0$. To see this, we proceed by the method of contradiction and suppose that there exists a sequence $\{u^t\}$ such that $(u^t)^TBu^t=1$ and $\|Bu^t\|^2 \rightarrow 0$.
Let $B=V^T\Sigma V$ be an eigenvalue decomposition of $B$, where $V$ is an orthogonal matrix and $\Sigma$ is a diagonal matrix. Letting $w^t=Vu^t$, we have
\[
({w^t})^T\Sigma w^t=1 \mbox{ and } ({w^t})^T\Sigma^2 w^t \rightarrow 0.
\]
Let $w^t=(w^t_{1},\ldots,w^t_{p})$ and $\Sigma={\rm Diag}(\lambda_1,\ldots,\lambda_{p})$. Then we see further that
\[
\sum_{i=1}^{p}\lambda_i(w^t_i)^2=1 \mbox{ and }  \sum_{i=1}^{p}\lambda_i^2(w^t_i)^2 \rightarrow 0.
\]
The second relation shows that either $\lambda_i=0$ or $w^t_i \rightarrow 0$ for each $i=1,\ldots,p$. This contradicts the first relation.
So, we must have $\alpha>0$, and hence the conclusion follows.
\end{proof}

\begin{proposition}{\bf (Local linear convergence rate under constraint qualification)}\label{prop:1}
Let $C = \{x \in \R^n:\; Ax = b\}$ and $D$ be a nonempty closed semi-algebraic set where $A \in \R^{m \times n}$, $m \le n$, and $b \in \R^m$.
Let $0 < \gamma < \sqrt{\frac32} - 1$ and suppose that the sequence $\{(y^t,z^t,x^t)\}$ generated from \eqref{scheme2} converges to $(y^*,z^*,x^*)$. Suppose,
in addition, that $C\cap D \neq \emptyset$ and the following constraint qualification holds:
\begin{equation}\label{CQ}
N_C\big(P_{C}(z^*)\big) \cap -N_D(z^*)=\{0\},
\end{equation}
where $N_S(a)$ is the (limiting) normal cone of $S$ at $a \in S$ defined in (\ref{eq:normal_cone}).
Then, $z^* \in C \cap D$ and there exist $\eta \in (0,1)$ and $\kappa>0$ such that for all large $t$,
\[
{\rm dist}\big(0, z^t-P_{C}(z^t)+N_D(z^t)\big) \le \kappa \, \eta^t.
\]
\end{proposition}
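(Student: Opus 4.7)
I would split the argument into two stages: first use the constraint qualification \eqref{CQ} to deduce $z^*\in C\cap D$, then apply Theorem~\ref{thm3}(ii) after verifying that the Kurdyka-{\L}ojasiewicz exponent of the Douglas-Rachford merit function $\D_\gamma$ at the limit point equals $\tfrac12$.

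\textbf{Feasibility of $z^*$.} By Theorem~\ref{thm2} applied with $f=\tfrac12 d_C^2$ and $g=\delta_D$, $z^*$ is stationary for \eqref{prob:feas}, so $z^*-P_C(z^*)\in -N_D(z^*)$. The projection identity for the closed convex set $C$ gives $z^*-P_C(z^*)\in N_C(P_C(z^*))$, and the CQ \eqref{CQ} then forces $z^*=P_C(z^*)\in C$. Since $\{z^t\}\subset D$ and $D$ is closed, $z^*\in D$ as well, hence $z^*\in C\cap D$. In particular $\nabla f(z^*)=0$, which together with the first line of \eqref{fyupdate} evaluated at the limit gives $y^*=z^*=x^*$ and $\D_\gamma(y^*,z^*,x^*)=0$.

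\textbf{Linear rate via Theorem~\ref{thm3}.} Since $\mathrm{dist}(0,z^t-P_C(z^t)+N_D(z^t))=\mathrm{dist}(0,\nabla f(z^t)+\partial g(z^t))$, the target bound is precisely case (ii) of Theorem~\ref{thm3}, provided the KL inequality \eqref{eq:KL_ineq2} holds along the generated sequence with $\psi(s)=c\,s^{1/2}$; see Remark~\ref{rem:thm3}(i). Because $C=\{x:Ax=b\}$ is affine, $y\mapsto y-P_C(y)$ is linear and $f$ is a convex quadratic, so $\D_\gamma$ equals a polynomial in $(y,z,x)$ plus the semi-algebraic indicator $\delta_D(z)$. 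The CQ \eqref{CQ} is a transversality condition at $z^*$ and, combined with the polyhedrality of $C$, yields a local linear error bound for the pair $(C,D)$ near $z^*$. Feeding this error bound into the explicit formula for $\partial\D_\gamma$ derived in the proof of Theorem~\ref{thm1.5}, the goal is to establish
\[
\D_\gamma(y,z,x)-\D_\gamma(y^*,z^*,x^*)\le M\cdot\mathrm{dist}\bigl(0,\partial\D_\gamma(y,z,x)\bigr)^2
\]
for $(y,z,x)$ near $(z^*,z^*,z^*)$ with $z\in D$, which is exactly the KL property with $\theta=\tfrac12$. Lemma~\ref{lemma:bound} enters to dominate the possibly indefinite symmetric quadratic form that appears after expanding the cross term $\langle x-y,z-y\rangle$ in \eqref{Ddef}.

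\textbf{Main obstacle.} The delicate step is converting the pointwise transversality at $z^*$ into the uniform quadratic bound above, because $N_D(\cdot)$ need not vary continuously near $z^*$. The polyhedrality of $C$ (which linearizes $\nabla f$ and keeps the smooth part of $\D_\gamma$ quadratic) and Lemma~\ref{lemma:bound} (which handles the indefinite quadratic form that arises) are precisely the two tools that let this translation go through. With $\theta=\tfrac12$ verified along the sequence, Theorem~\ref{thm3}(ii) delivers the claimed linear rate.
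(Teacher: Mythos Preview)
Your overall strategy coincides with the paper's: show $z^*\in C\cap D$ from the CQ, verify the KL inequality with exponent $\tfrac12$ along the sequence, then invoke Theorem~\ref{thm3}(ii) via Remark~\ref{rem:thm3}(i). You also correctly locate the role of Lemma~\ref{lemma:bound} --- the paper applies it to the Hessian of the smooth quadratic part $\hat\D_\gamma:=\D_\gamma-\delta_D$ to obtain $\|\nabla\hat\D_\gamma(y^t,z^t,x^t)\|\ge c\sqrt{\D_\gamma(y^t,z^t,x^t)-\D_\gamma(y^*,z^*,x^*)}$.

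Where your sketch diverges from the paper is the mechanism by which the CQ \eqref{CQ} is used. You propose to extract a \emph{linear error bound for the pair $(C,D)$} and feed it into the subdifferential formula; the paper does not go through any metric error bound for $C\cap D$. Instead, it proves directly that
\[
\mathrm{dist}\bigl(0,\partial\D_\gamma(y^t,z^t,x^t)\bigr)\ \ge\ \beta\,\|\nabla\hat\D_\gamma(y^t,z^t,x^t)\|
\]
for large $t$, and only the $z$-component is nontrivial. The key observation --- which your plan does not surface --- is that \emph{along the iterates} one has
\[
w^t:=\nabla_z\hat\D_\gamma(y^t,z^t,x^t)=-\tfrac1\gamma(z^t-x^t)=-\tfrac1\gamma(y^t-x^{t-1})=\tfrac{1}{1+\gamma}\bigl(x^{t-1}-P_C(x^{t-1})\bigr)\in N_C\bigl(P_C(x^{t-1})\bigr),
\]
using the third and first relations of \eqref{scheme2}. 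Thus normalized limits of $w^t$ lie in $N_C(P_C(z^*))$, normalized limits of any $v^t\in N_D(z^t)$ lie in $N_D(z^*)$, and \eqref{CQ} forbids $w^*=-v^*$. A compactness argument then yields $\langle w^t,v^t\rangle\ge -\theta\|w^t\|\|v^t\|$ with $\theta<1$ for large $t$, whence $\|w^t+v^t\|\ge\sqrt{1-\theta}\,\|w^t\|$.

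This matters because the inclusion $w^t\in N_C(\cdot)$ is a property of the \emph{sequence}, not of arbitrary $(y,z,x)$ near the limit; for a generic triple, $-\tfrac1\gamma(z-x)$ has no reason to point in an $N_C$ direction, so the CQ cannot be invoked. Your formulation ``for $(y,z,x)$ near $(z^*,z^*,z^*)$ with $z\in D$'' therefore overshoots, and an error-bound detour for $(C,D)$ would not obviously bridge this gap. Once you restrict to the sequence and use the displayed identity above, the argument goes through exactly as the paper does.
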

\begin{proof}
We first show that under the assumptions, we have $z^* \in C\cap D$ and $x^* = y^* = z^*$.
To this end, recall that any limit $(y^*,z^*,x^*)$ satisfies $y^*=z^* \in D$. From the optimality condition, we see also that
\[
0\in z^*-P_{C}(z^*)+N_D(z^*).
\]
On the other hand, note also that $z^*-P_{C}(z^*) \in N_C\big(P_{C}(z^*)\big)$. Hence, our assumption \eqref{CQ} implies that
$z^*-P_{C}(z^*)=0$ and so, $z^* \in C$. Thus, we have $y^*=z^* \in C\cap D$. Note that $y^* = \frac{1}{1+\gamma}(x^* + \gamma P_C(x^*))$, from which one can easily see that
$x^*=y^*$.

Before proceeding further, without loss of generality, we assume that $\D_\gamma(y^t,z^t, x^t) > \D_\gamma(y^*,z^*,x^*)$ and hence $(y^t,z^t, x^t) \neq (y^*,z^*,x^*)$ for all $t \ge 1$; since otherwise, the conclusions of the proposition follow easily.

Let $\D_\gamma(y,z,x) = \delta_D(z) + \hat \D_\gamma(y,z,x)$ where
\begin{eqnarray*}
  \hat \D_\gamma(y,z,x)&:=& \frac{1}{2}d_C^2(y)- \frac1{2\gamma}\|y - z\|^2 + \frac1\gamma \langle x-y,z-y\rangle \\
& = & \frac{1}{2}\|A^{\dag}(Ay-b)\|^2- \frac1{2\gamma}\|y - z\|^2 + \frac1\gamma \langle x-y,z-y\rangle.
\end{eqnarray*}
where $A^{\dag}$ is the pseudo inverse of the matrix $A$. Let us consider the function $h$ defined by
\[
h(y,z,x)=\hat \D_\gamma(y + y^*,z + z^*, x + x^*)-\hat \D_\gamma(y^*,z^*,x^*).
\]
Recall that $x^*=y^*=z^* \in C\cap D$. Hence, $h$ is a quadratic function with $h(0,0,0)=0$ and $\nabla h(0,0,0)=0$.
Thus, we have
$h(u)= \frac12 u^T B u$,
where $u = (y,z,x)$ and
\[
B=\nabla^2 h(0,0,0)=\left(\begin{array}{ccc}
A^{\dag}A+\frac{1}{\gamma}I_n & 0 & -\frac{1}{\gamma} I_n \\
0 & -\frac{1}{\gamma} I_n & \frac{1}{\gamma} I_n \\
-\frac{1}{\gamma} I_n & \frac{1}{\gamma} I_n & 0
\end{array} \right).
\]
Here, we use $I_n$ to denote the $n\times n$ identity matrix.
Clearly, $B$ is an indefinite $3n \times 3n$ matrix and so, the preceding lemma implies that there exists $\alpha>0$
such that $\|Bu\|^2 \ge \alpha \, u^TBu$ for all $u \in \mathbb{R}^{3n}$. Consequently, for any $u$ satisfying $u^TBu > 0$, we have
\begin{equation*}
  \|Bu\| \ge \sqrt{\alpha}\sqrt{u^TBu}.
\end{equation*}
Recall that $h(u)=\frac12 u^TBu$.  It then follows from the definition of $h$ that for all $t\ge 1$, we have
\begin{equation}\label{Dineqbd}
\begin{split}
\|\nabla \hat \D_\gamma(y^t,z^t, x^t)\| &\ge  \sqrt{2\alpha} \sqrt{\hat \D_\gamma(y^t,z^t, x^t)-\hat \D_\gamma(y^*,z^*,x^*)} \\
& =  \sqrt{2\alpha} \sqrt{\D_\gamma(y^t,z^t, x^t)- \D_\gamma(y^*,z^*,x^*)},
\end{split}
\end{equation}
where the equality follows from $\hat \D_\gamma(y^t,z^t, x^t)=\D_\gamma(y^t,z^t, x^t) > \D_\gamma(y^*,z^*,x^*)= \hat \D_\gamma(y^*,z^*,x^*)$ (thanks to $z^t \in D$).
Finally, to finish the proof, we only need to justify the existence of $\beta > 0$ such that for all large $t$,
\begin{equation}\label{eq:claim}
{\rm dist}(0,\partial \D_\gamma(y^t,z^t,x^t)) \ge \beta \|\nabla \hat \D_\gamma(y^t,z^t,x^t)\|.
\end{equation}
Then the conclusion of the proposition follows from Theorem~\ref{thm3} for the case $\theta=\frac12$ and Remark~\ref{rem:thm3}(i).

Note first that
\begin{equation*}
{\rm dist}(0,\partial \D_\gamma(y^t,z^t,x^t))= {\rm dist}\left(0,\{\nabla_y \hat \D_\gamma(y^t,z^t,x^t)\} \times \big(\nabla_z \hat \D_\gamma(y^t,z^t,x^t)+ N_D(z^t)\big) \times \{\nabla_x \hat \D_\gamma(y^t,z^t,x^t)\} \right),
\end{equation*}
To establish \eqref{eq:claim}, we only need to consider the partial subgradients with respect to $z$.
To this end, define $w^t := \nabla_z \hat \D_\gamma(y^t,z^t,x^t) = -\frac1\gamma (z^t - x^t)$ and let $v^t \in N_D(z^t)$ be such that
\[
{\rm dist}\left(0,\nabla_z \hat \D_\gamma(y^t,z^t,x^t)+ N_D(z^t)\right)= \left\|w^t+v^t\right\|.
\]

We now claim that, there exists $\theta \in [0,1)$  such that for all large $t$
\begin{equation}\label{eq:theta}
\langle w^t,v^t\rangle \ge -\theta \|w^t\| \cdot \|v^t\|.
\end{equation}
Otherwise, there exist $t_k \rightarrow \infty$ and $\theta_k \uparrow 1$ such that
\begin{equation}\label{eq:pp}
\langle w^{t_k},v^{t_k}\rangle < -\theta_k \|w^{t_k}\| \cdot \|v^{t_k}\|.
\end{equation}
In particular, $w^{t_k}\neq 0$ and $v^{t_k} \neq 0$. Furthermore, note that $v^t \in N_D(z^t)$ and
\[
w^t=-\frac{1}{\gamma}(z^t-x^t)=-\frac{1}{\gamma}(y^t-x^{t-1})=\frac{1}{1+\gamma}\left(x^{t-1}-P_C(x^{t-1})\right),
\]
where the second equality follows from the third relation in \eqref{scheme2} and the last relation follows from the first relation in \eqref{scheme2}.
By passing to a subsequence if necessary, we may assume that
\[
\frac{w^{t_k}}{\|w^{t_k}\|} \rightarrow w^* \in N_C(P_C(x^*)) \cap S =N_C(P_C(z^*)) \cap S \mbox{ and } \frac{v^{t_k}}{\|v^{t_k}\|} \rightarrow v^* \in N_D(z^*) \cap S,
\]
where $S$ is the unit sphere.
Dividing $\|w^{t_k}\| \|v^{t_k}\|$ on both sides of \eqref{eq:pp} and passing to the limit, we see that
$\langle w^*, v^* \rangle \le -1$.
This shows that $\|w^*+v^*\|^2=2+2\langle w^*, v^* \rangle \le 0$ and hence, $w^*=-v^*$. This contradicts \eqref{CQ} and thus \eqref{eq:theta} holds for some $\theta \in [0,1)$ and for all large $t$.

Now, using \eqref{eq:theta}, we see that for all large $t$
\begin{equation*}
\begin{split}
\left\|-\frac1\gamma (z^t - x^t)+v^t\right\|^2 &=\|w^t+v^t\|^2  =  \|w^t\|^2+\|v^t\|^2+2\langle w^t,v^t\rangle \\
 & \ge  \|w^t\|^2+\|v^t\|^2 - 2 \theta \|w^t\| \|v^t\|  \ge  (1-\theta) (\|w^t\|^2+\|v^t\|^2) \\
 & \ge (1-\theta) \left\|-\frac1\gamma (z^t - x^t)\right\|^2.
\end{split}
\end{equation*}
Therefore, for all large $t$
\begin{equation*}
\begin{split}
&{\rm dist}^2\left(0,\nabla_z \hat \D_\gamma(y^t,z^t,x^t)+ N_D(z^t)\right) =  \left\|-\frac1\gamma (z^t - x^t)+v^t\right\|^2 \\
& \ge  (1-\theta) \left\|-\frac1\gamma (z^t - x^t)\right\|^2 =(1-\theta) \|\nabla_z \hat \D_\gamma(y^t,z^t,x^t)\|^2.
\end{split}
\end{equation*}
Therefore, \eqref{eq:claim} holds with $\beta = \sqrt{1-\theta}$. Thus, the conclusion follows.
\end{proof}

\begin{remark}{\bf (Connection of our local convergence result to existing results)}
Assuming $C$ is affine and $D$ is super-regular, a similar local linear convergence result was
established in \cite[Theorem 3.18]{Luke1} for the classical DR splitting method considered in the literature for the feasibility problem, i.e., \eqref{scheme} applied to minimizing the sum of the indicator functions of the two sets $C$ and $D$. Our result is different from theirs in two aspects. First, we study the different algorithm \eqref{scheme2} which
is \eqref{scheme} applied to minimizing the squared distance function of $C$ subject to $D$.
Second, we look at semi-algebraic sets, and these sets are not necessarily super-regular in general. For a simple example, recall that the semi-algebraic set $D=\{(x_1,x_2):x_1x_2=0\}$ was shown in \cite[Remark 2.13]{Luke1} to be not super-regular.
\end{remark}

\begin{remark}
We also note that due to the nonconvex nature of the feasibility problem we consider, the local convergence requires a constraint qualification that depends on the limit
point $z^*$. Although the limit point $z^*$ is often hard to determine a priori, as we will see in Remark 7, this constraint qualification can be
regarded as an extension of the well-known linear regularity condition and is satisfied in many cases. Moreover, it is also possible that the constraint qualification is indeed
satisfied at every $z \in D$ for some pairs of sets $C$ and $D$. To see this, consider $C=\{(x_1,x_2):\;x_1=0\}$ and $D=\{(x_1,x_2):\; x_2 \ge -|x_1|\}$. Then we have
\begin{equation*}
  N_C(c) = \mathbb{R} \times \{0\},
\end{equation*}
for all $c=(c_1,c_2) \in C$, and for all $d=(d_1,d_2) \in D$
\begin{equation*}
  N_D(d) = \begin{cases}
    \{t(1,-1):\; t\ge 0\} & {\rm if}\ d_1 < 0, d_2=-|d_1|, \\
    \{t(-1,-1):\; t \ge 0\} & {\rm if}\ d_1 > 0, d_2=-|d_1|, \\
    \{t(1,-1):\; t\ge 0\} \cup \{t(-1,-1):\; t \ge 0\} & {\rm if}\ d_1 = 0, d_2=-|d_1|=0,\\
    \{(0,0)\} & {\rm if} \ d_2>-|d_2|.
  \end{cases}
\end{equation*}
Consequently, $N_C(P_C(z^*)) \cap -N_D(z^*)=\{(0,0)\}$ for all
$z^* \in D$.
\end{remark}

For a general nonconvex feasibility problem, i.e., to find a point in $\bigcap_{i=1}^M D_i$, with each $D_i$ being a nonempty closed set whose projection is easy to compute, it is classical to reformulate the problem as finding a point in the intersection of $H \cap (D_1\times D_2\times \cdots \times D_M)$, where
\begin{equation}\label{eq:H}
H = \{(x_1,\ldots,x_M):\; x_1 = \cdots = x_M\}.
\end{equation}
The algorithm \eqref{scheme2} can thus be applied. In addition, if it is known that $\bigcap_{i=1}^M D_i$ is bounded, one can further reformulate the problem as
finding a point in the intersection of $H_R\cap (D_1\times D_2\times \cdots \times D_N)$, where
\begin{equation}\label{eq:H_R}
H_R = \{(x_1,\ldots,x_M):\; x_1 = \cdots = x_M,\ \ \|x_1\|\le R\},
\end{equation}
and $R$ is an upper bound on the norms of the elements in $\bigcap_{i=1}^M D_i$.
We note that both the projections onto $H$ and $H_R$ can be easily computed.

We next state a corollary concerning the convergence of our DR splitting method as applied to finding a point in the intersection of $H \cap (D_1\times D_2\times \cdots \times D_M)$, assuming compactness of $D_i$, $i = 1,\ldots,M$. The proof is routine and is thus omitted.

\begin{corollary}{\bf (DR splitting method for general nonconvex feasibility problem)}
Let $D_1,\ldots,D_M$ be nonempty compact semi-algebraic sets in $\R^n$. Let $C=H$, where $H$ is defined as in \eqref{eq:H},
and let $D=D_1 \times \cdots \times D_M$. Let $0 < \gamma < \sqrt{\frac32} - 1$ and let the sequence $\{(y^t,z^t,x^t)\}$ be generated from \eqref{scheme2}. Then,
\begin{itemize}
\item[{\rm (i)}] the sequence $\{(y^t,z^t,x^t)\}$ converges to a point $(y^*,z^*,x^*)$, with $y^*=z^*$ and $z^*=(z_1^*,\ldots,z_M^*) \in D_1 \times \cdots D_M$ satisfying
\[
0 \in z^*_i-\frac{1}{M}\sum_{i=1}^M z^*_i + N_{D_i}(z^*_i), \ i=1,\ldots,M.
\]
\item[{\rm (ii)}]  Suppose, in addition, that $\bigcap_{i=1}^M D_i \neq \emptyset$ and the following constraint qualification holds:
\begin{equation}\label{CQ1}
 a_i \in N_{D_i}(z_i^*),\, i=1,\ldots,M,\  \sum_{i=1}^M a_i=0 \ \Rightarrow \ a_i=0, i=1,\ldots,M.
\end{equation}
Then, $z_1^*=\cdots=z_M^* \in \bigcap_{i=1}^M D_i$ and there exist $\eta \in (0,1)$ and $\kappa>0$ such that for all large $t$,
\begin{equation}\label{con2}
{\rm dist}\left(0, z^t_i-\frac{1}{M}\sum_{i=1}^M z^t_i + N_{D_i}(z^t_i)\right) \le \kappa \, \eta^t, \ i=1,\ldots,M.
\end{equation}
\end{itemize}
\end{corollary}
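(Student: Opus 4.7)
The plan is to show that this corollary follows directly from Corollary~\ref{cor1} and Proposition~\ref{prop:1} applied with $C := H$ and $D := D_1 \times \cdots \times D_M$, once the projection onto $H$ and the normal cone of the product $D$ are written down explicitly.

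First I would verify the hypotheses of Corollary~\ref{cor1}. The set $C = H$ is a linear subspace of $(\R^n)^M$, hence closed, convex and (being the solution set of linear equations) semi-algebraic. The product $D = D_1 \times \cdots \times D_M$ is nonempty, closed, compact and semi-algebraic since each $D_i$ is. Because $0 < \gamma < \sqrt{3/2} - 1$, Corollary~\ref{cor1} applies and yields a limit $(y^*,z^*,x^*)$ with $y^* = z^*$ and $0 \in z^* - P_H(z^*) + N_D(z^*)$. Writing $z^* = (z_1^*,\ldots,z_M^*)$, one has $z_i^* \in D_i$, the componentwise identity $[P_H(z^*)]_i = \frac{1}{M}\sum_{j=1}^M z_j^*$, and the product rule $N_D(z^*) = N_{D_1}(z_1^*) \times \cdots \times N_{D_M}(z_M^*)$. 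Reading off the $i$-th component of the stationarity inclusion gives part (i).

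For part (ii), I would invoke Proposition~\ref{prop:1}. The subspace $H$ is of the form $\{x : Ax = 0\}$ for an appropriate $A$, and $C \cap D$ is nonempty since any $d \in \bigcap_i D_i$ produces $(d,\ldots,d) \in H \cap D$. It remains to see that the constraint qualification \eqref{CQ} in this setting reduces to \eqref{CQ1}. Since $H$ is a subspace, $N_H(P_H(z^*)) = H^\perp = \{(a_1,\ldots,a_M) : \sum_i a_i = 0\}$, while $N_D(z^*)$ is the product of the $N_{D_i}(z_i^*)$. Setting $b_i := -a_i$ turns \eqref{CQ} into: if $b_i \in N_{D_i}(z_i^*)$ for all $i$ and $\sum_i b_i = 0$ then each $b_i = 0$, which is precisely \eqref{CQ1}. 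Proposition~\ref{prop:1} then gives $z^* \in H \cap D$, so $z_1^* = \cdots = z_M^* \in \bigcap_{i=1}^M D_i$, together with the local linear rate $\dist(0,\, z^t - P_H(z^t) + N_D(z^t)) \le \kappa\,\eta^t$. Splitting this block-distance into its $M$ blocks (the total distance dominates each blockwise distance) and substituting the explicit form of $[P_H(z^t)]_i$ delivers \eqref{con2}.

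The only real bookkeeping is the identification of $P_H$ and the product structure of $N_D$; no new analysis is required, since sequential convergence, the KL-based argument for convergence of the whole sequence, and the local linear rate have all already been established in the earlier results. For this reason I expect no genuine obstacle, which is presumably why the authors deem the proof routine and omit it.
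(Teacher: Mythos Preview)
Your proposal is correct and is precisely the routine verification the authors have in mind: the paper explicitly omits the proof, and your argument --- applying Corollary~\ref{cor1} for part~(i) and Proposition~\ref{prop:1} for part~(ii), together with the explicit formulas $[P_H(z)]_i=\frac{1}{M}\sum_j z_j$, $N_H(\cdot)=H^\perp=\{(a_1,\dots,a_M):\sum_i a_i=0\}$, and $N_D(z)=\prod_i N_{D_i}(z_i)$ --- is exactly the intended derivation. The only minor point worth noting is that $H$ can be written as $\{x:Ax=0\}$ with $A\in\R^{n(M-1)\times nM}$, so the dimensional requirement in Proposition~\ref{prop:1} is met.
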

%
%
\begin{remark}
The constraint qualification \eqref{CQ1} is known as the linear regularity condition which is satisfied in many cases; for example, when $D$ is the Cartesian product of two transverse $C^2$-manifolds \cite[Theorem 5.2]{MOR}. It plays an important role in quantifying the
local linear convergence rate of the alternating projection method (see \cite[Theorem 4.3]{MOR} and \cite[Theorem 5.16]{Lewis}).
\end{remark}

Before closing this section, we use the example given in \cite[Remark~6]{Bauschke} to illustrate the difference in the behavior of our DR splitting method \eqref{scheme2}
and the classical DR splitting method considered in the literature for the feasibility problem, i.e., \eqref{scheme} applied to minimizing the sum of the indicator functions of the two sets.\footnote{We note that there are also another analytical examples constructed in \cite{Luke2} and \cite{Bauschke},
where the authors showed that the classical DR splitting method need not to be convergent. For
simplicity, we do not discuss it here.}
\begin{example}\label{example1}{\bf (Different behavior: our DR splitting method  vs the classical DR splitting method)}
  We consider $C = \{x\in \R^2:\; x_2 = 0\}$ and $D = \{(0,0),(7+\eta,\eta),(7,-\eta)\}$, where $\eta \in (0,1]$. It was discussed in \cite[Remark~6]{Bauschke} that
the DR splitting method, initialized at $x^0 = (7,\eta)$ and applied to minimizing the sum of indicator functions of the two sets, is not convergent; indeed, the sequence generated has a discrete limit cycle.
On the other hand, convergence of \eqref{scheme2} applied to this pair of sets is guaranteed by Corollary~\ref{cor1}, as long as $0<\gamma < \sqrt{\frac32}-1$. Below, we show explicitly that the generated sequence is convergent, and the
limit is $y^* = z^* = (7+\eta,\eta)$ and $x^*= (7+\eta,(1+\gamma)\eta)$.

To this end, we first consider a sequence $\{a_t\}$ defined by $a_1 = 2-\frac{1}{1+\gamma} > 0$ and for any $t\ge 1$,
\begin{equation}\label{recurrence}
a_{t+1} = \frac{\gamma}{1+\gamma}a_t + 1.
\end{equation}
Then $a_t > 0$ for all $t$ and we have
\[
a_{t+1} - a_t = \frac{\gamma}{1+\gamma}(a_t - a_{t-1}) = \cdots = \left(\frac{\gamma}{1+\gamma}\right)^{t-1}(a_2 - a_1).
\]
Consequently, $\{a_t\}$ is a Cauchy sequence and is thus convergent. Furthermore, it follows immediately from \eqref{recurrence} that $\lim_{t\rightarrow \infty}a_t = 1+\gamma$.

Now, we look at \eqref{scheme2} initialized at $x^0 = (7,\eta)$. Then $y^1 = \left(7,\frac{\eta}{1+\gamma}\right)$ and $2y^1 - x^0 = \left(7,\left[\frac{2}{1+\gamma}-1\right]\eta\right)$.
Since $\gamma < \sqrt{\frac{3}{2}}-1< \frac{3}{5}$, it is not hard to show that $z^1 = (7+\eta,\eta)$ and consequently $x^1 = \left(7+\eta,a_1\eta\right)$. Inductively, one can show that for all $t\ge 1$,
\[
y^{t+1} = \left(7+\eta,\frac{a_t}{1+\gamma}\eta\right),\ \ z^{t+1} = \left(7+\eta,\eta\right) \ \ {\rm and}\ \ x^{t+1} = \left(7+\eta,a_{t+1}\eta\right).
\]
Consequently, $y^* = z^* = (7+\eta,\eta)$ and $x^* = (7+\eta,(1+\gamma)\eta)$.
\end{example}

\section{Numerical simulations}\label{sec5}

In this section, we perform numerical experiments to test the DR splitting method on solving a nonconvex feasibility problem. All codes are written in MATLAB.

We consider the problem of finding an $r$-sparse solution of a linear system $Ax = b$.
To apply the DR splitting method, we let $C = \{x\in \R^n:\; Ax = b\}$ and $D = \{x\in \R^n:\; \|x\|_0 \le r,\ \|x\|_\infty \le 10^6\}$, where $\|x\|_0$ denotes the cardinality of $x$ and $\|x\|_\infty$ is the $\ell_\infty$ norm of $x$; the $10^6$ is just an arbitrary choice of large number to guarantee compactness of $D$. We benchmark our algorithm against the alternating projection method, which is an application of the proximal gradient algorithm with step-length $1$ to solve \eqref{prob:feas}. Specifically, in this latter algorithm, one initializes at an $x^0$ and updates
\begin{equation*}
  x^{t+1} \in \Argmin_{\|x\|_0\le r, \|x\|_\infty\le 10^6} \left\{\|x - (x^t + A^\dagger(b - Ax^t))\|\right\};
\end{equation*}
a closed-form solution for this subproblem can be found in \cite[Proposition~3.1]{Lu_Zhang13}.
We initialize both algorithms at the origin and terminate them when
\[
\frac{\max\{\|x^t - x^{t-1}\|,\|y^t - y^{t-1}\|,\|z^t - z^{t-1}\|\}}{\max\{\|x^{t-1}\|,\|y^{t-1}\|,\|z^{t-1}\|,1\}}< 10^{-8}
\ \ {\rm and}\ \ \frac{\|x^t - x^{t-1}\|}{\max\{\|x^{t-1}\|,1\}}< 10^{-8}
\]
respectively, for the DR splitting method and the alternating projection method.
Furthermore, for the DR splitting method, we adapt the heuristics described in Remark~\ref{rem1}: we initialize $\gamma = 150\cdot\gamma_0$ and update $\gamma$ as
$\max\{\frac\gamma 2,0.9999\cdot\gamma_0\}$ whenever $\gamma > \gamma_0 := \sqrt{\frac32} - 1$, and the sequence satisfies either $\|y^t-y^{t-1}\| > \frac{1000}t$ or $\|y^t\| > 10^{10}$.\footnote{We also solved a couple instances using directly a small $\gamma < \gamma_0$ in the DR splitting method. The sequence generated tends to get stuck at stationary points that are not global minimizers.}

We generate random linear systems with sparse solutions. We first generate an $m\times n$ matrix $A$ with i.i.d. standard Gaussian entries. We then randomly generate an $\hat x\in \R^r$ with $r = \lceil \frac m5\rceil$, again with i.i.d. standard Gaussian entries. A random sparse vector $\tilde x\in \R^n$ is then generated by first setting $\tilde x = 0$ and then specifying $r$ random entries in $\tilde x$ to be $\hat x$. Finally, we set $b = A\tilde x$.

In our experiments, for each $m=100$, $200$, $300$, $400$ and $500$, and $n=4000$, $5000$ and $6000$, we generate $50$ random instances as described above. The computational results are reported in Table~\ref{t1}, where we report the number of iterations averaged over the $50$ instances, as well as the maximum and minimum function values at termination (fval$_{\max}$ and fval$_{\min}$).\footnote{We report $\frac12 d_C^2(z^t)$ for DR splitting, and $\frac12 d_C^2(x^t)$ for alternating projection.} We also report the number of successes and failures (succ and fail), where we declare a success if the function value at termination is below $10^{-12}$, and a failure if the value is above $10^{-6}$.\footnote{The two thresholds are different and hence ${\rm succ} + {\rm fail}$ is not necessarily $50$. We set different thresholds so as to see if it is easy to determine whether the method has got stuck at stationary points that are not global minimizers.} We observe that both methods fail more often for harder instances (smaller $m$), and the DR splitting method clearly outperforms the alternating projection method in terms of both the number of iterations and the solution quality.

\begin{table}[h]
\small
\begin{center}
\begin{tabular}{|r|r||r|r|r|r|r||r|r|r|r|r|}  \hline
\multicolumn{2}{|c||}{Data} & \multicolumn{5}{c||}{DR} & \multicolumn{5}{c|}{Alt Proj}
\\ 
$m$ & $n$ & ${\rm iter}$ & ${\rm fval}_{\max}$ & ${\rm fval}_{\min}$ & succ &  fail & ${\rm iter}$ & ${\rm fval}_{\max}$ & ${\rm fval}_{\min}$ & succ &  fail
\\ \hline
   100 &  4000 &    1967 & 3e-02 & 6e-17 & 30 & 20 &   1694 & 8e-02 & 4e-03 &  0 & 50 \\
   100 &  5000 &    2599 & 2e-02 & 2e-16 & 18 & 32 &   1978 & 7e-02 & 5e-03 &  0 & 50 \\
   100 &  6000 &    2046 & 1e-02 & 1e-16 & 12 & 38 &   2350 & 5e-02 & 4e-05 &  0 & 50 \\
   200 &  4000 &     836 & 2e-15 & 2e-16 & 50 &  0 &   1076 & 3e-01 & 3e-05 &  0 & 50 \\
   200 &  5000 &    1080 & 3e-15 & 2e-16 & 50 &  0 &   1223 & 2e-01 & 2e-03 &  0 & 50 \\
   200 &  6000 &    1279 & 7e-02 & 1e-16 & 43 &  7 &   1510 & 2e-01 & 1e-13 &  1 & 49 \\
   300 &  4000 &     600 & 3e-15 & 2e-16 & 50 &  0 &    872 & 4e-01 & 6e-14 &  3 & 46 \\
   300 &  5000 &     710 & 4e-15 & 4e-16 & 50 &  0 &   1068 & 3e-01 & 9e-14 &  3 & 45 \\
   300 &  6000 &     812 & 3e-15 & 2e-16 & 50 &  0 &   1252 & 3e-01 & 1e-13 &  1 & 49 \\
   400 &  4000 &     520 & 2e-15 & 3e-17 & 50 &  0 &    818 & 6e-01 & 7e-14 & 30 & 19 \\
   400 &  5000 &     579 & 3e-15 & 5e-16 & 50 &  0 &    946 & 4e-01 & 9e-14 & 12 & 36 \\
   400 &  6000 &     646 & 4e-15 & 6e-16 & 50 &  0 &   1108 & 3e-01 & 1e-13 &  4 & 44 \\
   500 &  4000 &     499 & 1e-16 & 1e-18 & 50 &  0 &    640 & 4e-01 & 6e-14 & 38 & 10 \\
   500 &  5000 &     519 & 1e-15 & 4e-17 & 50 &  0 &    846 & 4e-01 & 8e-14 & 37 & 13 \\
   500 &  6000 &     556 & 3e-15 & 3e-16 & 50 &  0 &   1071 & 5e-01 & 1e-13 & 22 & 28 \\
\hline
\end{tabular}
\end{center}
\caption{Comparing Douglas-Rachford splitting and alternating projection on random instances.}\label{t1} \normalsize
\end{table}

Finally, as suggested by one of the reviewers, we also consider the classical DR splitting method, i.e., the DR splitting method applied to minimizing the sum of indicator functions of the sets $C$ and $D$. As discussed in Example~\ref{example1}, this method can be non-convergent in general.

In our numerical tests, we use the same initialization and termination criteria as our DR splitting method. In addition, we also terminate the algorithm once the number of iterations exceeds $20000$. We solve exactly the same $50$ instances for each $m=100$, $200$, $300$, $400$ and $500$, and $n=4000$, $5000$ and $6000$ from Table~\ref{t1}. The computational results are reported in Table~\ref{t2}, where, as before, we report the number of iterations averaged over the $50$ instances, the maximum and minimum function values $\frac12 d_C^2(z^t)$ at termination, and the number of successes and failures defined as above. One can observe that this approach is slower than both our DR splitting method and the alternating projection method, while its solution quality is worse than our DR splitting method, but is better than the alternating projection method.

\begin{table}[h]
\small
\begin{center}
\begin{tabular}{|r|r||r|r|r|r|r|}  \hline
\multicolumn{2}{|c||}{Data} & \multicolumn{5}{c|}{DR applied to ${\delta_C + \delta_D}$}
\\ 
$m$ & $n$ & ${\rm iter}$ & ${\rm fval}_{\max}$ & ${\rm fval}_{\min}$ & succ &  fail
\\ \hline
   100 &  4000 &   20000 & 2e+00 & 3e-03 &  0 & 50 \\
   100 &  5000 &   19821 & 2e+00 & 1e-16 &  1 & 49 \\
   100 &  6000 &   20000 & 1e+00 & 1e-04 &  0 & 50 \\
   200 &  4000 &   10595 & 2e+00 & 1e-16 & 33 & 16 \\
   200 &  5000 &   16251 & 2e+00 & 6e-18 & 19 & 31 \\
   200 &  6000 &   17636 & 3e+00 & 2e-16 & 11 & 39 \\
   300 &  4000 &    4649 & 2e+00 & 9e-17 & 45 &  5 \\
   300 &  5000 &    6926 & 4e+00 & 2e-16 & 43 &  7 \\
   300 &  6000 &   12236 & 4e+00 & 6e-17 & 25 & 25 \\
   400 &  4000 &    2322 & 4e-15 & 2e-16 & 50 &  0 \\
   400 &  5000 &    3357 & 2e+00 & 3e-16 & 49 &  1 \\
   400 &  6000 &    4487 & 2e+00 & 2e-16 & 48 &  2 \\
   500 &  4000 &    1986 & 6e-15 & 4e-16 & 50 &  0 \\
   500 &  5000 &    2859 & 5e-15 & 3e-16 & 50 &  0 \\
   500 &  6000 &    3065 & 5e-15 & 1e-16 & 50 &  0 \\
\hline
\end{tabular}
\end{center}
\caption{Computational results for the DR splitting applied to minimizing $\delta_C+\delta_D$ on the same random instances from Table~\ref{t1}.}\label{t2} \normalsize
\end{table}

\section{Concluding remarks}\label{sec6}
In this paper, we examine the convergence behavior of the Douglas-Rachford splitting method when applied to solving nonconvex optimization problems,
particularly, the nonconvex feasibility problem.
By introducing the Douglas-Rachford merit function, we prove the global convergence and establish local convergence rates for the DR splitting method when the step-size
parameter $\gamma$ is chosen sufficiently small (with an explicit threshold) and the sequence generated is bounded. We also provide simple sufficient conditions that guarantee the boundedness of the sequence generated from the DR splitting method. Preliminary numerical experiments are performed, which indicate that the DR splitting method usually outperforms the alternating projection method in finding a sparse solution of a linear system, in terms of both solution quality and number of iterations taken.\\

\noindent {\bf Acknowledgement.} We would like to thank the two anonymous referees for their comments that helped improve the manuscript.


\begin{thebibliography}{99}

\bibitem{Jon1}
A.~F.~Arag\'{o}n and  J.M.~Borwein.
\newblock Global convergence of a non-convex Douglas-Rachford iteration.
\newblock {\em J. Global Optim.} 57, pp. 1--17 (2012).

\bibitem{Jon_AZNIAM}
A.~F.~Arag\'{o}n, J.~M. Borwein and M.~K. Tam.
\newblock Douglas-Rachford feasibility methods for matrix completion problems.
\newblock {\em ANZIAM J.} 55, pp. 299--326 (2014).

\bibitem{Jon_JOTA}
A.~F.~Arag\'{o}n, J.~M. Borwein and M.~K. Tam.
\newblock Recent results on Douglas-Rachford methods for combinatorial optimization problems.
\newblock {\em J. Optim. Theory \& Appl.} 163, pp. 1--30 (2014).

    \bibitem{AtBoReSo10}
  H. Attouch, J. Bolte, P. Redont and A. Soubeyran.
  \newblock Proximal alternating minimization and projection methods for nonconvex problems. An approach based on the Kurdyka-Lojasiewicz inequality.
  \newblock {\em Math. Oper. Res.} 35, pp. 438--457 (2010).

  \bibitem{AtBoSv13}
  H. Attouch, J. Bolte and B. F. Svaiter.
  \newblock Convergence of descent methods for semi-algebraic and tame problems: proximal algorithms, forward-backward splitting, and regularized Gauss-Seidel methods.
  \newblock {\em Math. Program.} 137, pp. 91--129 (2013).

\bibitem{Jon1993}
 H.~H. Bauschke and J.~M. Borwein.
 \newblock On the convergence of von Neumann's alternating projection algorithm for two sets.
 \newblock {\em Set-Valued Anal.} 1, pp. 185--212 (1993).

\bibitem{Bauschke_SIAM_review}
H.~H. Bauschke and J.~M. Borwein.
\newblock On projection algorithms for solving convex feasibility problems.
\newblock {\em SIAM Rev.} 38, pp. 367--426 (1996).

\bibitem{BauCom11}
H.~H. Bauschke and P.~L. Combettes.
\newblock {\em Convex Analysis and Monotone Operator Theory in Hilbert Spaces}.
\newblock Springer (2011).

\bibitem{Bauschke}
H.~H. Bauschke and D. Noll.
\newblock On the local convergence of the Douglas-Rachford algorithm.
\newblock Preprint (2014). Available at \verb+http://arxiv.org/abs/1401.6188+.




  \bibitem{BolDanLew07}
  J.~Bolte, A.~Daniilidis and A.~Lewis.
  \newblock The {\L}ojasiewicz inequality for nonsmooth subanalytic functions with applications to subgradient dynamical systems.
  \newblock {\em SIAM J. Optim.}  17,  pp. 1205--1223 (2007).

  \bibitem{BolDanLewShi07}
  J.~Bolte, A.~Daniilidis, A.~Lewis and M. Shiota.
  \newblock Clarke subgradients of stratifiable functions.
  \newblock {\em SIAM J. Optim.}  18,  pp. 556--572 (2007).

  \bibitem{Li}
  J.~M.~Borwein, G.~Li and L.~J.~Yao.
  \newblock Analysis of the convergence rate for the cyclic projection algorithm applied to basic semialgebraic convex sets.
  \newblock {\em SIAM J. Optim.} 24, pp. 498--527 (2014).

  \bibitem{ComPes07}
  P.~L. Combettes and J.-C. Pesquet.
  \newblock A Douglas-Rachford splitting approach to nonsmooth convex variational signal recovery.
  \newblock {\em IEEE J. Sel. Top. Signal Proces.} 1, pp. 564--574 (2007).


    \bibitem{PDE}
  J.~Douglas and H.~H.~Rachford.
\newblock  On the numerical solution of heat conduction problems in two or three space variables.
\newblock {\em T. Am. Math. Soc.} 82, pp. 421--439 (1956).

  \bibitem{Eckstein_Bertsekas}
  J.~Eckstein and D.~P. Bertsekas.
  \newblock On the Douglas-Rachford splitting method and the proximal
point algorithm for maximal monotone operators.
\newblock {\em Math. Program.} 55, pp. 293--318 (1992).

\bibitem{GanRechYa11}
S. Gandy, B. Recht and I. Yamada.
\newblock Tensor completion and low-$n$-rank tensor recovery via convex optimization.
\newblock {\em Inverse Probl.} 27, 025010 (2011).

\bibitem{GHLYZ13}
  P. Gong, C. Zhang, Z. Lu, J. Huang and J. Ye.
  \newblock A general iterative shrinkage and thresholding algorithm for non-convex regularized optimization problems.
  \newblock The 30th International Conference on Machine Learning (ICML 2013).

\bibitem{He}
B. He and X. Yuan.
\newblock On the $O(1/n)$ convergence rate of the Douglas-Rachford alternating direction method.
\newblock {\em SIAM J. Numer. Anal.} 50, pp. 700--709 (2012).

\bibitem{Luke1}
R. Hesse and D. R. Luke.
\newblock Nonconvex notions of regularity and convergence of fundamental algorithms for feasibility problems.
\newblock {\em SIAM J. Optim.} 23, pp. 2397--2419 (2013).

\bibitem{Luke2}
R. Hesse, D. R. Luke and P. Neumann.
\newblock Alternating projections and Douglas-Rachford for sparse affine feasibility.
\newblock {\em IEEE T. Signal. Proces.} 62, pp. 4868--4881 (2014).

\bibitem{Lewis}
A.~S. Lewis, D.~R. Luke and J.~Malick.
\newblock  Local convergence for alternating and averaged nonconvex projections.
\newblock {\em Found. Comput. Math.} 9, pp. 485--513 (2009).

\bibitem{MOR}
A.~S. Lewis and J.~Malick.
\newblock Alternating projections on manifolds.
\newblock {\em Math. Oper. Res.} 33, pp. 216--234 (2008).


\bibitem{Li_Bor_Pham}
G. Li, B.~S. Mordukhovich and T.~S. Pham.
\newblock New fractional error bounds for polynomial systems with applications to H\"{o}lderian stability in optimization and spectral theory of tensors.
\newblock To appear in {\em Math. Program.}, DOI 10.1007/s10107-014-0806-9.

  \bibitem{Lions_Mercier}
   P.-L.~Lions and B.~Mercier.
   \newblock Splitting algorithms for the sum of two nonlinear operators.
   \newblock {\em SIAM J. Numer. Anal.} 16, pp. 964--979 (1979).

   \bibitem{Lu_Zhang13}
   Z. Lu and Y. Zhang.
   \newblock Sparse approximation via penalty decomposition methods.
   \newblock {\em SIAM J. Optim.} 23, pp. 2448--2478 (2013).

   \bibitem{Luke08}
   D. R. Luke.
   \newblock Finding best approximation pairs relative to a convex and a prox-regular set in Hilbert space.
   \newblock {\em SIAM J. Optim.} 19, pp. 714--739 (2008).

    \bibitem{PaStBe14}
    P. Patrinos, L. Stella and A. Bemporad.
    \newblock Douglas-Rachford splitting: complexity estimates and accelerated variants.
    \newblock Preprint (2014). Available at \verb+http://arxiv.org/abs/1407.6723+.

    \bibitem{Phan14}
    H. Phan.
    \newblock Linear convergence of the Douglas-Rachford method for two closed sets.
    \newblock To appear in {\em Optim.}. Available at \verb+http://arxiv.org/abs/1401.6509+.

    \bibitem{PRT00}
    R. A. Poliquin, R. T. Rockafellar and L. Thibault.
    \newblock Local differentiability of distance functions.
    \newblock {\em Trans. Amer. Math. Soc.} 352, pp. 5231--5249 (2000).

    \bibitem{Rock98}
  R. T. Rockafellar and R. J.-B. Wets.
  \newblock {\em Variational Analysis}.
  \newblock Springer (1998).

  \bibitem{SunTohYang14}
  D. Sun, K.-C. Toh and L. Yang.
  \newblock A convergent proximal alternating direction method of multipliers for conic programming with $4$-block constraints.
  \newblock Preprint (2014).

  \bibitem{ZengLinWangXu14}
  J. Zeng, S. Lin, Y. Wang and Z. Xu.
  \newblock $L_{1/2}$ regularization: convergence of iterative half thresholding algorithm.
  \newblock {\em IEEE Trans. Signal Process.} 62, pp. 2317--2329 (2014).

  \end{thebibliography}
\end{document}